\newtheorem{theorem}{Theorem}[section]
\newtheorem{lemma}[theorem]{Lemma}
\newenvironment{proof}{{\noindent \bf
Proof:}}{\hfill\qed\bigskip}
\newenvironment{proofof}[1]{\vspace{0.2cm}\hfill\\%
\noindent\textbf{Proof of #1.}}{$\Box$
\vspace{0.2cm}\\}
\newcommand{\mS}{\mathbb S}
\newcommand{\R}{\mathbb R}
\newcommand{\cA}{{\cal A}}
\newcommand{\cF}{{\cal F}}
\newcommand{\cG}{{\cal G}}
\newcommand{\cN}{{\cal N}}
\newcommand{\cP}{{\cal P}}
\newcommand{\bn}{{\bf n}}
\newcommand{\bs}{{\bf s}}
\newcommand{\bp}{{\bf p}}
\newcommand{\bq}{{\bf q}}
\newcommand{\bx}{{\bf x}}
\newcommand{\by}{{\bf y}}
\newcommand{\bz}{{\bf z}}
\newcommand{\bsomega}{{\boldsymbol \omega}}
\newcommand{\qed}{\hfill$\Box$}
\newcommand{\be}{\begin{equation}}
\newcommand{\ee}{\end{equation}}
\newcommand{\ba}{\begin{array}}
\newcommand{\ea}{\end{array}}
\newcommand{\beas}{\begin{eqnarray*}}
\newcommand{\eeas}{\end{eqnarray*}}
\newcommand{\bea}{\begin{eqnarray}}
\newcommand{\eea}{\end{eqnarray}}
\newcommand{\lb}{\label}
\newcommand{\sumlk}{\sum_{\ell=0}^{\infty} \sum_{k=1}^{N(d,\ell)}}
\newcommand{\wh}{\widehat}
\newcommand{\whphi}{\widehat{\phi}}
\newcommand{\inprod}[2]{\left\langle{#1},{#2}\right\rangle}
\newcommand{\vertiii}[1]{{\left\vert\kern-0.25ex\left\vert\kern-0.25ex\left\vert #1
    \right\vert\kern-0.25ex\right\vert\kern-0.25ex\right\vert}}
\newcommand{\HsigS}{{H^\sigma(\mS^d)}}
\newcommand{\HsigO}{{H^\sigma(\Omega)}}
\title{Zooming from Global to Local: A Multiscale RBF Approach}
\author{Q.~T.~Le Gia, I.~H.~Sloan \\ School of Mathematics and Statistics\\
           University of New South Wales\\ Sydney, NSW 2052\\
           Australia
\and H.~Wendland \\ Department of Mathematics\\ University of
Bayreuth\\ D-95440 Bayreuth\\ Germany}
\begin{document}

\maketitle

\begin{abstract}
Because physical phenomena on Earth's surface occur on many different
length scales, it makes sense when seeking an efficient approximation to
start with a crude global approximation, and then make a sequence of
corrections on finer and finer scales. It also makes sense eventually to
seek fine scale features locally, rather than globally. In the present
work, we start with a global multiscale radial basis function (RBF)
approximation, based on a sequence of point sets with decreasing mesh
norm, and a sequence of (spherical) radial basis functions with
proportionally decreasing scale centered at the points. We then prove that
we can ``zoom in'' on a region of particular interest, by carrying out
further stages of multiscale refinement on a local region. The proof
combines multiscale techniques for the sphere from Le Gia, Sloan and
Wendland, SIAM J. Numer. Anal. 48 (2010) and Applied Comp. Harm. Anal. 32
(2012), with those for a bounded region in $\mathbb{R}^d$ from Wendland,
Numer. Math. 116 (2012). The zooming in process can be continued
indefinitely, since the condition numbers of matrices at the different
scales remain bounded. A numerical example illustrates the process.
\end{abstract}

\section{Introduction}

In many modern areas of geosciences, such as geomagnetic or gravitational
field modeling, the problem of interpolation from scattered data on the
sphere arises naturally. Such problems are often of multiscale nature, so one
would like models that can be used to draw conclusions globally as well as
locally. For example, in modeling the global gravitational field one
should be able to see the general nature of the global field as well as
local gravitational anomalies.

Multiscale interpolation and approximation for functions on the unit
sphere has been considered by a number of authors using different
techniques. Some authors used wavelets defined on spheres
\cite{AntVan99,SchSwe95}, but these are not suitable for scattered data.
Other authors have proposed kernel methods based on truncations of the
expansions of some special kernels into spherical harmonics
\cite{FreGerSch98,NarPetWar06,Mha05}; these methods require a quadrature
scheme on the sphere which can integrate spherical polynomials exactly,
but the construction of a good quadrature based on scattered data is
itself a non-trivial problem
\cite{GraKunPot09,HSW_handbook,LeGMha08,FilThe08,MhaNarWar01}.

In recent articles \cite{LeGSloWen10,LeGSloWen12}, we proposed a
multiscale interpolation framework using radial basis functions for
functions that lie in Sobolev spaces defined on the unit sphere. The
theory underlying our multiscale method will work for scattered data. In
this paper we introduce a ``zooming-in'' framework, which allows the
multiscale algorithm to model the data from the global scale and then zoom
in to local regions.  We do this by combining multiscale techniques for
the sphere with those for a bounded region established in \cite{Wen10}.

The paper is organised as follows. In Section~\ref{sec:prelim} we review
necessary materials about Sobolev spaces on spheres and positive definite
kernels defined via radial basis functions (RBFs). The global and local
multiscale algorithm using spherical RBFs is then introduced in
Section~\ref{sec:zoom}. A convergence results for functions in the native
space in given there. The next section, Section~\ref{sec:escape}, deals
with convergence results for function in Sobolev spaces with lesser
smoothness. Finally we conclude the paper with numerical experiments given
in Section~\ref{sec:numerics}.
\section{Preliminaries}\label{sec:prelim}
In this section, we will introduce necessary materials for the main results presented
in the paper.
\subsection{Sobolev spaces on the unit sphere}
Let $\mS^d$ be the unit sphere in $\R^{d+1}$.
Denote the inner product in $L_2(\mS^d)$ by
\[
\inprod{v}{w} :=  \int_{\mS^d} v w dS,
\]
where $dS$ is the surface measure on the unit sphere, and denote the
measure of the whole sphere by $\omega_d$ (so, for example,
$\omega_2=4\pi$). Recall \cite{Mul66} that a spherical harmonic is the
restriction to $\mS^d$ of a homogeneous harmonic polynomial $Y(\bx)$ in
$\R^{d+1}$. The space of all spherical harmonics of degree $\ell$ on
$\mS^d$, denoted by $H_\ell$, has an $L_2$ orthonormal basis
\[
  \{ Y_{\ell,k}:\; k=1,\ldots,N(d,\ell)\},
\]
where
\[
N(d,0)=1 \mbox{ and } N(d,\ell)=\frac{(2\ell+d-1)\Gamma(\ell+d-1)}
{\Gamma(\ell+1)\Gamma(d)} \text{ for } \ell \ge 1.
\]
The space of spherical harmonics of degree $\le L$ will be denoted
by $\cP_L := \oplus_{\ell=0}^L H_\ell$; it has dimension $N(d+1,L)$.

Every function $f\in L_2(\mS^d)$ can be expanded in terms of
spherical harmonics,
\[
 f = \sum_{\ell=0}^\infty \sum_{k=1}^{N(d,\ell)} \widehat{f}_{\ell, k} Y_{\ell, k},
 \qquad
 \widehat{f}_{\ell, k} = \inprod{f}{Y_{\ell, k}}.
\]
For a non-negative parameter $\sigma$, the Sobolev space $H^\sigma(\mS^d)$ may
be defined by
\begin{equation}\label{def:Sob S}
H^\sigma(\mS^d):= \left\{ f \in L_2(\mS^d):
\|f\|^2_{H^\sigma(\mS^d)} := \sum_{\ell=0}^\infty \sum_{k=1}^{N(d,\ell)}
(1+\ell)^{2\sigma} | \widehat{f}_{\ell, k}|^2 < \infty \right\}.
\end{equation}
Note that $H^0(\mS^d) = L_2(\mS^d)$.


Sobolev spaces on $\mS^d$ can also be defined using local charts
(see \cite{LioMag}). Here we use a specific atlas
of charts, as in \cite{HubMor04}.

Let $\bz$ be a given point on $\mS^d$.  The spherical cap centered at
$\bz$ of radius $\theta$ is defined by
\begin{eqnarray*}
G(\bz,\theta) &:=& \{ \by \in \mS^{d} : \cos^{-1}(\bz \cdot \by) < \theta \},  \qquad \theta \in (0, \pi),
\end{eqnarray*}
where $\bz\cdot \by$ denotes the Euclidean inner product of $\bz$~and $\by$
in $\R^{d+1}$.

Let $\hat{\bn}$ and $\hat{\bs}$ denote the north
and south poles of $\mS^d$, respectively. Then a simple cover
for the sphere is provided by
\be\lb{def_Ui}
  U_1 = G(\hat{\bn},\theta_0) \quad \mbox{and}\quad
  U_2 = G(\hat{\bs},\theta_0),
  \mbox{ where } \theta_0 \in (\pi/2,2\pi/3).
\ee
The stereographic projection $\sigma_{\hat{\bn}}$ of the punctured sphere
$\mS^d\setminus\{\hat{\bn}\}$ onto $\R^d$ is defined as a mapping that maps
$\bx \in \mS^d \setminus \{\hat{\bn}\}$ to the intersection of the equatorial
hyperplane $\{\bz=0\}$ and the extended line that passes through $\bx$ and $\hat{\bn}$. 
The stereographic projection $\sigma_{\hat{\bs}}$ based on $\hat{\bs}$ can be defined analogously. We set
\be\lb{def_phi}
  \psi_1 = \frac{1}{\tan(\theta_0/2)}\sigma_{\hat{\bs}}|_{U_1}
  \quad \mbox{ and }\quad
  \psi_2 = \frac{1}{\tan(\theta_0/2)}\sigma_{\hat{\bn}}|_{U_2},
\ee
so that $\psi_k$, $k=1,2$, maps $U_k$ onto $B(0,1)$, the unit ball in $\R^d$.
We conclude that $\cA =\{U_k,\psi_k\}_{k=1,2}$ is a $C^\infty$ atlas of
covering coordinate charts for the sphere. It is known (see \cite{Rat94})
that the stereographic coordinate charts $\{\psi_k\}_{k=1,2}$ as defined in
(\ref{def_phi}) map spherical caps to Euclidean balls, but in general
concentric spherical caps are not mapped to concentric Euclidean balls.
The projection $\psi_k$, for $k=1,2$, does not distort too much the geodesic
distance between two points $\bx,\by \in \mS^d$, as shown in \cite{LeGNarWarWen06}.

With the atlas so defined, we define the map $\pi_k$ which takes a real-valued
function $g$ with compact support in $U_k$ into a real-valued function on $\R^d$ by
\[
 \pi_k(g) (\bx) = \left\{  \begin{array}{ll}
                            g \circ \psi^{-1}_k(\bx),&
                            \mbox{ if } \bx\in B(0,1),\\
                            0, & \mbox{ otherwise }.
                         \end{array}
                \right.
\]
Let $\{\chi_k:\mS^d \rightarrow \R \}_{k=1,2}$ be a partition of unity subordinated to the atlas, 
i.e., a pair of non-negative infinitely differentiable functions $\chi_k$ on $\mS^d$ with compact support in $U_k$,
such that $\sum_{k}\chi_k = 1$. For any function $f:\mS^d \rightarrow \R$, we can use the partition of unity to write
\[
  f = \sum_{k=1}^2 \chi_k f,
  \mbox{ where } (\chi_k f)(\bx) = \chi_k(\bx)f(\bx), \quad \bx \in \mS^d.
\]
The Sobolev space $H^\sigma(\mS^d)$ is then the set
\[
  \left\{f \in L_2(\mS^d) :
  \pi_k(\chi_k f) \in H^\sigma(\R^d) \quad\mbox{ for } k=1,2 \right\},
\]
which is equipped with the norm
\begin{equation}\label{defH1atlas}
  \vertiii {f}_{H^\sigma(\mS^d)} :=
  \left( \sum_{k=1}^2 \|\pi_k (\chi_k f)\|^2_{H^\sigma(\R^d)} \right)^{1/2}.
\end{equation}
This norm is equivalent to the $H^\sigma(\mS^d)$ norm given in
\eqref{def:Sob S} (see \cite{LioMag}). 
From now on we will use only the $\Vert \cdot \Vert$ notation for the equivalent norms.

We recall that \cite{Ada75} the Sobolev space $H^\sigma(\R^d)$
is the set
\[ 
\left\{ f \in L_2(\R^d) : \int_{\R^d} |\cF(f)(\bsomega)|^2(1+ \|\bsomega\|_2^2)^\sigma d\bsomega < \infty \right\},
\]
where $\cF(f)$ is the usual Fourier transform 
\[
  \cF(f)(\omega) = (2\pi)^{-d/2} \int_{\R^d} f(\bx) e^{-i \bx^T \bsomega} d\bx.
\]

Before introducing local Sobolev spaces on subdomains of the unit sphere,
let us recall a few key definitions on Sobolev spaces defined on a given bounded domain
$D$ in $\R^d$. For a given non-negative integer $m$, the Sobolev space
$H^m(D)$ consist of all $f$ with weak derivatives $D^\alpha f \in L_2(D)$,
$|\alpha| \le m$. The semi-norms and norms are defined by
\[
|f|_{H^m(D)} = \left( \sum_{|\alpha|=m} \| D^\alpha f\|^2_{L_2(D)}\right)^{1/2}
\quad \text{and} \quad
\|f\|_{H^m(D)} = \left( \sum_{|\alpha| \le m} \| D^\alpha f\|^2_{L_2(D)}\right)^{1/2}.
\]
For $m\in {\mathbb N}_0$, $ 0< s <1$, the fractional Sobolev spaces $H^{m+s}(D)$
is defined to be the set of all $f$ for which the following semi-norm and norm
\begin{align*}
|f|_{H^{m+s}(D)} &:= \left(
\sum_{|\alpha| = m} 
\int_D \int_D \frac{|D^\alpha f(\bx) - D^\alpha f(\by)|^2}{ \| \bx - \by\|^{d+2s}_2 }
\right)^{1/2} \\
\|f\|_{H^{m+s}(D)} &:= (\|f\|^2_{H^m(D)} + |f|_{H^{m+s}(D)})^{1/2}
\end{align*}
are finite. 

Let $\Omega \subset \mS^d$ be an open connected set with sufficiently smooth boundary. 
In order to define the spaces on $\Omega$, let
$
  D_k = \psi_k( \Omega \cap U_k)\text{ for } k=1,2.
$
The local Sobolev space $H^\sigma(\Omega)$ is defined to be the set
\[
 \left\{f \in L_2(\Omega) : \pi_k(\chi_k f)|_{D_k} \in H^\sigma(D_k)
 \text{ for } k=1,2, \; D_k \neq \emptyset\right\},
\]
which is equipped with the norm
\be\lb{def:local Sob}
\Vert f \Vert_{H^\sigma(\Omega)} =
\left( \sum_{k=1}^2 \|\pi_k(\chi_k f)|_{D_k}\|^2_{H^\sigma(D_k)}\right)^{1/2}
\ee
where, if $\Omega=\emptyset$, then we adopt the convention that
$\|\cdot\|_{H^\sigma(D_k)} = 0$.

We observe, following \cite{HubMor04}, that there exists a positive
constant $C_{\cA}$, depending on $\cA$ and the partition of unity
$\{\chi_1,\chi_2\}$, such that the geodesic distance of supp$\;\chi_k$
from the boundary of $U_k$ is strictly greater than $C_{\cA}$. A spherical
cap $G(\bz,\theta)$ with $\theta < C_{\cA}/3$ will have its closure being
a subset of at least one of the open subsets $U_1$ or $U_2$, defined by
\eqref{def_Ui}, and if the cap $G(\bz,\theta)$ is not a subset of one of
these subsets, say $U_2$, then its intersection with supp$\;\chi_2$ must
be empty.

Now we state an extension theorem for a spherical cap on
the sphere.
\begin{theorem}[Extension operator]\label{extension}
Let $\Omega=G(\bz,\theta)$ be a spherical cap for some $\bz \in \mS^d$ and
$\theta < C_{\cA}/3$. There is an extension operator
$E: H^\nu(\Omega) \rightarrow H^\nu(\mS^{d})$ for all $ \nu \ge 0$, 
with $E$ independent of $\nu$, such that
\begin{enumerate}
\item $Ef |_{\Omega} = f$ for all  $f\in  H^\nu(\Omega)$,
\item $\| Ef \|_{H^\nu(\mS^{d})}
 \le C_\nu  \|f\|_{H^\nu(\Omega)}$.
\end{enumerate}
\end{theorem}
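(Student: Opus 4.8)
The plan is to transport the problem, through the atlas $\cA$, to a Euclidean extension problem, apply there a classical Stein-type extension operator, and then push the result back to the sphere after multiplying by a cut-off that confines its support to a single chart.

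Since $\theta<C_{\cA}/3$, the observation preceding the theorem gives $\overline{\Omega}\subset U_k$ for some $k$; say $k=1$ (the other case is symmetric), and note that $\overline{\Omega}$ is then a \emph{compact} subset of the open set $U_1$. Because the (scaled) stereographic charts map spherical caps to Euclidean balls, as recalled above, $D_1:=\psi_1(\Omega)$ is a Euclidean ball whose closure $\overline{D_1}$ is a compact subset of $B(0,1)$, hence a bounded Lipschitz domain. I would fix a slightly larger cap $G(\bz,\theta')$ with $\theta<\theta'<C_{\cA}/3$ and $\overline{G(\bz,\theta')}\subset U_1$, together with a cut-off $\zeta\in C^\infty(\R^d)$ satisfying $\zeta\equiv1$ on $D_1$ and $\mathrm{supp}\,\zeta\subset\psi_1(G(\bz,\theta'))$.

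Given $f\in H^\nu(\Omega)$, set $g:=f\circ\psi_1^{-1}$ on $D_1$, let $\mathcal{E}g\in H^\nu(\R^d)$ be a classical (Stein) extension of $g$ — a single operator $\mathcal{E}$ valid for all $\nu\ge0$ with $\|\mathcal{E}g\|_{H^\nu(\R^d)}\le c_\nu\|g\|_{H^\nu(D_1)}$ — and define $Ef:=(\zeta\,\mathcal{E}g)\circ\psi_1$ on $U_1$ and $Ef:=0$ elsewhere. Since $\mathrm{supp}(\zeta\,\mathcal{E}g)$ is a compact subset of $B(0,1)$, its push-forward is supported in a compact subset of $U_1$, so $Ef$ is well defined; and $E$ does not depend on $\nu$ because $\mathcal{E}$, $\zeta$ and the charts do not. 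Property~1 is immediate: on $\Omega$ one has $\psi_1(\Omega)=D_1$, where $\zeta\equiv1$ and $\mathcal{E}g=g$, hence $Ef=g\circ\psi_1=f$.

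For Property~2 the only genuine work is the equivalence between the manifold Sobolev norm and its pulled-back Euclidean counterpart. Writing $f=\chi_1 f+\chi_2 f$ on $\Omega$ and using that either $\chi_2 f\equiv0$ on $\Omega$ or (again by the observation) $\overline{\Omega}$ is compactly contained in $U_1\cap U_2$, the smoothness of $\chi_1,\chi_2$ and of the transition map $\psi_2\circ\psi_1^{-1}$ yields $\|g\|_{H^\nu(D_1)}\le c\,\|f\|_{H^\nu(\Omega)}$. Conversely, evaluating the atlas norm \eqref{defH1atlas} of $Ef$, each summand $\pi_k(\chi_k Ef)$ is a product of fixed $C^\infty_c$ functions with $\mathcal{E}g$ (for $k=2$ after composition with the transition map), so boundedness of multiplication by $C^\infty_c$ functions and of composition with smooth diffeomorphisms on $H^\nu(\R^d)$ gives $\|Ef\|_{H^\nu(\mS^d)}\le c'\,\|\mathcal{E}g\|_{H^\nu(\R^d)}$; chaining this with the bound for $\mathcal{E}$ gives Property~2 with a constant $C_\nu$ depending on $\nu$, on $\cA$, on the partition of unity, and on $\zeta$. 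I expect the most delicate bookkeeping to be ensuring that $\mathrm{supp}(Ef)$ meets $\mathrm{supp}\,\chi_2$ only when the enlarged cap lies inside $U_2$ — so that $\pi_2(\chi_2 Ef)$ is both meaningful and controllable — which is routine given the quantitative separation guaranteed by $C_{\cA}$.
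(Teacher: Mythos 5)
Your construction is correct, but it takes a genuinely different route from the paper. The paper's proof is essentially a two-step reduction: it cites Hubbert and Morton (Theorem 4.3 of that reference) for the existence of an extension operator $E:H^{k+i}(\Omega)\to H^{k+i}(\mS^d)$ at the two integer orders $i=0,1$ bracketing $\nu=k+s$, and then invokes the operator interpolation property to obtain boundedness at the fractional order, with the bound $C_{k,0}^{1-s}C_{k,1}^{s}$; Property 1 is inherited from the integer case via the embedding $H^\nu\subset H^k$. You instead build the operator from scratch: transport to the chart, apply a universal Stein-type Euclidean extension on the ball $D_1$, cut off, and push back, verifying the two chart-norm comparisons directly. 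Your approach is more self-contained (it effectively re-proves the Hubbert--Morton result rather than citing it) and makes transparent where the hypothesis $\theta<C_{\cA}/3$ enters, namely in guaranteeing that $\overline{\Omega}$, and the slightly enlarged cap carrying $\mathrm{supp}(Ef)$, sit inside a single chart and interact with $\mathrm{supp}\,\chi_2$ only when the cap is compactly contained in $U_2$; the paper's approach is much shorter but leans entirely on the cited theorem. One point worth making explicit: your assertion that a \emph{single} operator $\mathcal{E}$ satisfies $\|\mathcal{E}g\|_{H^\nu(\R^d)}\le c_\nu\|g\|_{H^\nu(D_1)}$ for \emph{all} real $\nu\ge0$ is true for a Lipschitz (here smooth) domain, but its standard proof for non-integer $\nu$ itself proceeds by interpolating the integer-order bounds for the fixed Stein operator and identifying the interpolation space with the Gagliardo-normed $H^{k+s}(D_1)$ used in the paper; so the interpolation step that the paper performs on the sphere is not avoided in your argument, only relocated to $\R^d$ and absorbed into the citation. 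With that caveat acknowledged, and with the routine facts you invoke (boundedness on $H^\nu(\R^d)$ of multiplication by fixed $C_c^\infty$ functions and of composition with the smooth transition map $\psi_2\circ\psi_1^{-1}$ on the relevant compact sets), your proof is complete and yields an operator $E$ independent of $\nu$, as required.
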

\begin{proof}
The case of $\nu$ being an integer was proved in \cite[Theorem
4.3]{HubMor04}. The framework for the case of fractional order $\nu$ is
also available in \cite{HubMor04} even if the explicit statement is not
given there. For the sake of completeness, we give the proof here.

When $\nu$ is not an integer, let $k$ be the non-negative integer for which
$\nu = k+s$, with $s \in (0,1)$. By \cite[Theorem 4.3]{HubMor04},
there is an extension operator which maps $H^{k+i}(\Omega)$ to
$H^{k+i}(\mS^d)$ and there are constants $C_{k,i}$ for $i=0,1$ so that
\begin{align*}
  \|E f\|_{H^{k+i}(\mS^{d})} &\le C_{k,i}  \|f\|_{H^{k+i}(\Omega)}, \qquad i=0,1.
\end{align*}
Using the \emph{operator interpolation property} (see \cite{Tri78}) we conclude
that $E$ is a bounded linear map from $H^\nu(\Omega)$ to $H^\nu(\mS^d)$
and
\[
  \| Ef \|_{H^\nu(\mS^{d})}
 \le C_{k,0}^{1-s} C_{k,1}^{s}  \|f\|_{H^\nu(\Omega)}.
\]
Property 1) follows from the fact that $H^{\nu}(\mS^d) \subset H^k(\mS^d)$
and $H^{\nu}(\Omega) \subset H^k(\Omega)$.
\end{proof}
\subsection{Positive definite kernels on the unit sphere}
A continuous function $\Phi:\mS^d \times \mS^d \rightarrow \R$ we call a
\emph{positive semi-definite kernel}~\cite{Sch42,XuChe92} on $\mS^d$ if it
satisfies the following conditions:
\begin{itemize}
\item[(i)] $\Phi$ is continuous,
\item[(ii)] $\Phi(\bx,\by) = \Phi(\by,\bx)$ for all $\bx,\by \in \mS^d$,
\item[(iii)] For any set of distinct scattered points
    $X=\{\bx_1,\ldots,\bx_K\}\subset \mS^d$, the symmetric $K\times
    K$-matrix $[\Phi(\bx_p,\bx_q)]$ is positive semi-definite.
\end{itemize}
We call $\Phi$ positive definite if the matrix is positive definite.

We will work with a zonal kernel~$\Phi$ defined in terms of a univariate
function~$\phi:[-1,1] \rightarrow \R$ by
\begin{equation}\label{equ:Phi res}
\Phi(\bx,\by)=\phi(\bx\cdot \by)
\quad\text{for all $\bx$, $\by\in\mS^d$.}
\end{equation}
Following M\"uller~\cite{Mul66}, let $P_\ell(t)$ denote the Legendre
polynomial of degree~$\ell$ for~$\R^{d+1}$, and expand $\phi(t)$ in a
Fourier--Legendre series

\begin{equation}\label{equ:defphi}
\phi(t) = \frac{1}{\omega_d}
          \sum_{\ell=0}^\infty
           N(d,\ell)\, \wh{\phi}(\ell) P_\ell(t).
\end{equation}
Due to the addition formula for spherical harmonics \cite[page 10]{Mul66}
\begin{equation}\label{add_thm}
\sum_{k=1}^{N(n,\ell)}
    Y_{\ell, k}(\bx) Y_{\ell, k}(\by) =
         \frac{N(d,\ell)}{\omega_d} P_\ell(\bx \cdot \by),
\end{equation}
the kernel $\Phi$ can be represented as
\[
\Phi(\bx,\by) = \sum_{\ell=0}^\infty \sum_{k=0}^{N(d,\ell)}
\widehat{\phi}(\ell) Y_{\ell,k}(\bx) Y_{\ell,k}(\by),
\quad \bx,\by \in \mS^d.
\]
and since $P_\ell(1)=1$ we find that
\begin{equation}\label{eq: Phi H^tau}
\|\Phi(\bx,\cdot)\|_{H^\sigma(\mS^d)}^2=\frac{1}{\omega_d}\sum_{\ell=0}^\infty
        (1+\ell)^{2\sigma} (\wh{\phi}(\ell))^2 N(d,\ell),
        \quad\text{for all $\bx\in\mS^d$.}
\end{equation}

Chen et al.~\cite{CheMenSun03} proved that the kernel $\Phi$ is
positive definite if and only if $\wh{\phi}(\ell)\ge 0$ for all $\ell\ge
0$ and $\wh{\phi}(\ell)> 0$ for infinitely many even values of~$\ell$ and
infinitely many odd values of~$\ell$; see also Schoenberg~\cite{Sch42} and
Xu and Cheney~\cite{XuChe92}. Here, we assume there is a $\sigma>d/2$ and
there are positive constants $c_1$~and $c_2$ such that
\begin{equation}\label{cond:what}
c^2_1(1+\ell)^{-2\sigma}\le \wh{\phi}(\ell)\le c_2^2(1+\ell)^{-2\sigma},
\quad \text{for all $\ell\ge 0$},
\end{equation}
hence, $\Phi$ is positive definite. Also, since
$N(d,\ell)=O(\ell^{d-1})$ as~$\ell\to\infty$, the
sum~\eqref{eq: Phi H^tau} is finite for each
fixed~$\bx\in\mS^d$. Thus the function~$\by\mapsto\Phi(\bx, \by)$ belongs
to~$H^\sigma(\mS^d)$. Moreover, this function is continuous
by the Sobolev imbedding theorem.

The reproducing kernel Hilbert space (RKHS)
(also called the \emph{native space}) induced by $\Phi$ is defined to be
\begin{equation}\label{def:native}
\cN_\Phi = \left\{ f \in L_2(\mS^2):
  \|f\|^2_{\Phi} := \sumlk \frac{|\widehat{f}_{\ell, k}|^2}{\widehat{\phi}(\ell)}
   <\infty \right\}.
\end{equation}
Alternatively, $\cN_\Phi$ is the completion of
span$\{\Phi(\cdot,\bx): \bx \in \mS^d\}$ with respect to the norm $\|\cdot\|_{\Phi}$.
The norm is associated with the following inner product
\[
  (f,g)_\Phi = \sumlk \frac{ \wh{f}_{\ell, k} \wh{g}_{\ell, k}}
                          {\whphi(\ell)}, \qquad  f,g \in \cN_\Phi.
\]
The kernel $\Phi$ has the reproducing property
with respect to this inner product, that is
\begin{equation}\label{rep}
 f(\bx) =  (f,\Phi(\cdot,\bx))_\Phi, \qquad \bx\in\mS^d, f\in \cN_\Phi.
\end{equation}
It follows from \eqref{cond:what} that the norms in $H^{\sigma}(\mS^d)$
and $\cN_\Phi$ are equivalent.

\subsection{Kernels defined from radial basis functions}
Let $\Pi:\R^{d+1}\rightarrow \R$ be a compactly supported radial basis function (RBF)
with associated RKHS $H^{\tau}(\R^{d+1})$ with
$\tau > (d+1)/2$. Examples of such RBFs are the Wendland functions (see \cite{Wen05}).

By restricting the function $\Pi$ to the unit sphere $\mS^d \subset
\R^{d+1}$, we have a positive definite, zonal kernel on the unit sphere
\[
 \Phi(\bx,\by) = \Pi(\bx-\by), \quad \bx,\by \in \mS^d.
\]

\begin{lemma}[Native spaces]\label{native}
Let $\Pi : \R^{d+1} \to \R$ be a positive definite function with native
space $\cN_\Pi(\R^{d+1}) = H^{\tau}(\R^{d+1})$ with $\tau> (d+1)/2$. Then
$\mathcal {N}_\Phi (\mS^{d})= H^{\sigma}(\mS^{d})$ with $\sigma=\tau-\frac{1}{2}$.
\end{lemma}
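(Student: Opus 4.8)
The plan is to compare the Fourier--Legendre coefficients $\wh\phi(\ell)$ of the restricted kernel $\Phi$ with the decay of the Fourier transform $\cF(\Pi)$ of the RBF $\Pi$, and then to invoke the norm-equivalence characterization \eqref{cond:what}. Concretely, since $\Pi$ has native space $H^\tau(\R^{d+1})$ with $\tau>(d+1)/2$, its (radial) Fourier transform satisfies a two-sided bound $c_1(1+\|\bsomega\|_2)^{-2\tau}\le \cF(\Pi)(\bsomega)\le c_2(1+\|\bsomega\|_2)^{-2\tau}$. The first main step is therefore to express $\wh\phi(\ell)$ in terms of $\cF(\Pi)$. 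Restricting a radial function on $\R^{d+1}$ to $\mS^d$ and expanding in Legendre polynomials for $\R^{d+1}$, one gets a classical formula (going back to Schoenberg, and worked out for Wendland-type kernels in the literature on RBFs on spheres) that writes $\wh\phi(\ell)$ as an integral of $\cF(\Pi)(r\,\cdot)$ against a Bessel function $J_{\ell+(d-1)/2}$ (or equivalently a Gegenbauer-weighted integral of $\Pi$ itself). The key quantitative fact is that this transformation shifts the smoothness index by exactly $1/2$: a radial function whose Fourier transform decays like $(1+r)^{-2\tau}$ in $d+1$ dimensions produces Legendre coefficients decaying like $(1+\ell)^{-2\tau+1} = (1+\ell)^{-2\sigma}$ with $\sigma=\tau-\tfrac12$.

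The second step is to establish the two-sided bound $c_1'(1+\ell)^{-2\sigma}\le\wh\phi(\ell)\le c_2'(1+\ell)^{-2\sigma}$ rigorously. For the upper bound one estimates the Bessel integral directly using the decay of $\cF(\Pi)$ and standard asymptotics/uniform bounds for Bessel functions. For the lower bound one needs positivity of the integrand (which holds because $\cF(\Pi)\ge 0$, $\Pi$ being positive definite) together with a lower bound on the Bessel integral over a suitable range of $r$ comparable to $\ell$; this is where one uses $\tau>(d+1)/2$ to ensure integrability and the non-degeneracy of the native space of $\Pi$. Once \eqref{cond:what} is verified with this $\sigma$, the equivalence of $\|\cdot\|_\Phi$ and $\|\cdot\|_{H^\sigma(\mS^d)}$ — already noted after \eqref{def:native} — gives $\cN_\Phi(\mS^d)=H^\sigma(\mS^d)$ as sets with equivalent norms, which is the assertion.

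I expect the main obstacle to be the precise bookkeeping in the dimension-shift formula and, in particular, the lower bound on the Bessel-type integral defining $\wh\phi(\ell)$: one must rule out cancellation and show the integral is bounded below by a constant times $(1+\ell)^{-2\sigma}$ uniformly in $\ell$. An alternative, cleaner route that avoids Bessel asymptotics is to cite the known result (e.g.\ Narcowich--Ward or related work) that restriction to $\mS^d$ maps $H^\tau(\R^{d+1})$ boundedly onto $H^{\tau-1/2}(\mS^d)$ and that the restricted kernel of a compactly supported RBF reproduces exactly this trace space; then the proof reduces to matching native-space norms through that trace identity. In the write-up I would state the dimension-shift relation as a lemma (or cite it), verify \eqref{cond:what} from it, and close by appealing to the norm equivalence already recorded in the paper.
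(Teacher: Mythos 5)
Your proposal is correct and follows essentially the same route as the paper: the entire content of the lemma is reduced to the two-sided bound $c(1+\ell)^{-2\sigma}\le\wh{\phi}(\ell)\le C(1+\ell)^{-2\sigma}$ with $\sigma=\tau-\tfrac12$, after which $\cN_\Phi(\mS^d)=H^\sigma(\mS^d)$ is immediate from the definitions \eqref{def:Sob S} and \eqref{def:native}. The paper obtains that coefficient bound simply by citing Proposition~4.2 of Narcowich--Sun--Ward (2007) --- i.e.\ precisely the ``cleaner route'' you offer as an alternative --- rather than re-deriving the Bessel-integral dimension-shift estimates, which are the substance of that cited result.
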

\begin{proof}
Using \cite[Proposition 4.2]{NarSunWar07}, we deduce that
\[
c (1+\ell)^{-2\sigma} \le \wh{\phi}(\ell) \le C (1+\ell)^{-2\sigma}.
\]
So the result follows from the definition of the Sobolev
spaces~\eqref{def:Sob S} and the native spaces \eqref{def:native} on
$\mS^d$.
\end{proof}

For a given $\delta>0$, we define the scaled version $\Phi_\delta$
of the kernel $\Phi$ by
\begin{equation}\label{scaledPhi}
\Phi_\delta(\bx,\by) = \delta^{-d} \Pi((\bx-\by)/\delta).
\end{equation}
We can expand $\Phi_\delta$ into a series of spherical harmonics
\[
  \Phi_\delta(\bx,\by) =
  \sumlk \wh{\phi_\delta}(\ell) Y_{\ell, k}(\bx) Y_{\ell, k}(\by),
\]
in which the Fourier coefficients satisfy the following
condition (see \cite[Theorem 6.2]{LeGSloWen10})
\begin{equation}\label{cond:whatdel}
 c_1^2(1+\delta\ell)^{-2\sigma} \le \wh{\phi_\delta}(\ell)
 \le c_2^2 (1+\delta\ell)^{-2\sigma},
\end{equation}
with the coefficients $c_1$ and $c_2$ from \eqref{cond:what} possibly
relaxed so that \eqref{cond:whatdel} holds for all $0 < \delta \le 1$.

For a function $f \in H^\sigma(\mS^d)$, we define the
norm corresponding to the scaled kernel $\Phi_\delta$ by

\begin{equation}\label{def:scaledPhinorm}
\|f\|_{\Phi_\delta} =
\left( \sumlk \frac{ |\wh{f}_{\ell, k}|^2 }
              {\widehat{\phi_\delta}(\ell)} \right)^{1/2},
\end{equation}
and the corresponding inner product is
\begin{equation}\label{def:scaledPhi inner}
  (f,g)_{\Phi_\delta} =
   \sumlk \frac{ \wh{f}_{\ell, k} \wh{g}_{\ell, k}}
    {\wh{\phi_\delta}(\ell)}, \qquad f,g \in \cN_\Phi.
\end{equation}
Clearly the norms $\Vert\cdot\Vert_{\Phi_\delta}$ for different $\delta$
are all equivalent, as given in the following lemma.

\begin{lemma}[Norm-equivalence] \label{norm}
Let $\Pi:\R^{d+1}\to \R$ be a reproducing
kernel of $H^\tau(\R^{d+1})$ with $\tau>(d+1)/2$. Let
$\Phi_\delta(\bx,\by)=\delta^{-d}\Pi((\bx-\by)/\delta)$ with
$\bx,\by\in \mS^{d}$. Then with $\sigma=\tau-1/2$,
\[
c_1 \|u\|_{\Phi_\delta} \le \|u\|_{H^\sigma(\mS^{d})} \le c_2
\delta^{-\sigma}\|u\|_{\Phi_\delta}
\]
for all $u\in H^\sigma(\mS^{d})$.
\end{lemma}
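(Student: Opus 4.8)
The plan is to reduce everything to a coefficient-by-coefficient comparison in the spherical harmonic expansion, using the two-sided bound \eqref{cond:whatdel} on $\wh{\phi_\delta}(\ell)$ together with a trivial inequality relating $1+\ell$ and $1+\delta\ell$ when $0<\delta\le 1$. Both norms are diagonal in the orthonormal basis $\{Y_{\ell,k}\}$: by \eqref{def:Sob S} we have $\|u\|_{\HsigS}^2=\sumlk (1+\ell)^{2\sigma}|\wh{u}_{\ell,k}|^2$, while by \eqref{def:scaledPhinorm} we have $\|u\|_{\Phi_\delta}^2=\sumlk |\wh{u}_{\ell,k}|^2/\wh{\phi_\delta}(\ell)$. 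Hence it suffices to show that the weight $1/\wh{\phi_\delta}(\ell)$ is bounded above by $c_1^{-2}(1+\ell)^{2\sigma}$ and below by $c_2^{-2}\delta^{2\sigma}(1+\ell)^{2\sigma}$, uniformly in $\ell\ge 0$ and $0<\delta\le 1$.

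First I would record the elementary inequalities $1+\delta\ell\le 1+\ell\le \delta^{-1}(1+\delta\ell)$, valid for $0<\delta\le 1$ and $\ell\ge 0$; the right-hand one is just $\delta+\delta\ell\le 1+\delta\ell$. Raising these to the power $2\sigma$ and feeding them into \eqref{cond:whatdel}, the lower bound $\wh{\phi_\delta}(\ell)\ge c_1^2(1+\delta\ell)^{-2\sigma}$ gives
\[
\frac{1}{\wh{\phi_\delta}(\ell)}\;\le\;\frac{(1+\delta\ell)^{2\sigma}}{c_1^2}\;\le\;\frac{(1+\ell)^{2\sigma}}{c_1^2},
\]
and the upper bound $\wh{\phi_\delta}(\ell)\le c_2^2(1+\delta\ell)^{-2\sigma}$ gives
\[
(1+\ell)^{2\sigma}\;\le\;\delta^{-2\sigma}(1+\delta\ell)^{2\sigma}\;\le\;\frac{c_2^2\,\delta^{-2\sigma}}{\wh{\phi_\delta}(\ell)}.
\]
Multiplying each inequality by $|\wh{u}_{\ell,k}|^2$ and summing over $\ell$ and $k$ yields, from the first, $\|u\|_{\Phi_\delta}^2\le c_1^{-2}\|u\|_{\HsigS}^2$ (which incidentally also shows $\|u\|_{\Phi_\delta}<\infty$ for every $u\in\HsigS$), i.e. $c_1\|u\|_{\Phi_\delta}\le\|u\|_{\HsigS}$; and from the second, $\|u\|_{\HsigS}^2\le c_2^2\delta^{-2\sigma}\|u\|_{\Phi_\delta}^2$, i.e. $\|u\|_{\HsigS}\le c_2\delta^{-\sigma}\|u\|_{\Phi_\delta}$. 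Taking square roots gives the claimed chain of inequalities.

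There is no genuine obstacle here; the computation is routine once the bound \eqref{cond:whatdel} is in hand. The only points requiring a little care are: that the constants $c_1,c_2$ in \eqref{cond:whatdel} were chosen (possibly after relaxation) so that the bound holds uniformly for all $0<\delta\le 1$, which is exactly what makes the constants in the statement independent of $\delta$; and that $\sigma=\tau-1/2>d/2$ (so that Lemma~\ref{native} applies), which together with $N(d,\ell)=O(\ell^{d-1})$ guarantees that all the series appearing above converge absolutely, so the term-by-term manipulations are legitimate.
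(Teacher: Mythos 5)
Your proof is correct, and it is exactly the standard coefficient-by-coefficient comparison that the cited result \cite[Lemma 3.1]{LeGSloWen10} carries out; the paper itself gives no argument beyond that citation. The only ingredient beyond \eqref{cond:whatdel} is the elementary pair of inequalities $1+\delta\ell\le 1+\ell\le\delta^{-1}(1+\delta\ell)$ for $0<\delta\le 1$, which you state and use correctly.
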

\begin{proof}
See \cite[Lemma 3.1]{LeGSloWen10}.
\end{proof}

From \eqref{def:scaledPhi inner} it follows that the reproducing property
\eqref{rep} extends to general $\delta$, that is
\begin{equation}\label{repdelta}
f(\bx) = (f,\Phi_\delta(\bx,\cdot))_{\Phi_\delta}, \quad \bx \in \mS^d,
\;\; f \in H^{\sigma}(\mS^d).
\end{equation}
\section{From global to local multiscale RBF interpolation}
\label{sec:zoom}
In this section we first consider RBF interpolation with a single scale,
then turn to multiscale interpolation, both global and local.
\subsection{Interpolation using spherical RBFs}
Let $X=\{\bx_1,\ldots,\bx_N\} \subset \Omega \subseteq \mS^d$ be a finite
set of distinct points on $\Omega$. We define the mesh norm $h_{X,\Omega}$
and the separation radius $q_X$ of this point set by
\[
  h_{X,\Omega} = \sup_{\bx \in \Omega} \min_{\bx_j \in X} \theta(\bx,\bx_j),
  \quad
  q_X = \frac{1}{2} \min_{i\ne j} \theta(\bx_i,\bx_j),
\]
where $\theta(\bx,\by) = \cos^{-1}(\bx \cdot \by)$ is the geodesic
distance on $\mS^d$. If $\Omega$ is a proper subset of $\mS^d$ then we say
that $h_{X,\Omega}$ is a local mesh norm. If $\Omega=\mS^d$ then the mesh
norm is global, and we simply write $h_X$.

We define the interpolation operator $I_{X,\delta}$ associated with
the set $X$ and the kernel $\Phi_\delta$ by
\begin{equation}\label{def:Ialpha}
  I_{X,\delta} f(\bx) =
   \sum_{j=1}^N b_j \Phi_{\delta}(\bx,  \bx_j),
   \quad  I_{X,\delta} f(\bx_j) = f(\bx_j) \mbox{ for all } \bx_j \in X.
\end{equation}
If $\delta=1$ then we simply write $I_{X} f$ instead of $I_{X,1} f$.
From the interpolation condition and \eqref{repdelta} we deduce that
\[
   (f - I_{X,\delta} f,\Phi_\delta(\cdot,\bx_j))_{\Phi_\delta} =
   f(\bx_j) - I_{X,\delta} f(\bx_j) = 0,
   \text{ for all } \bx_j \in X.
\]
Hence $I_{X,\delta} f$ is the orthogonal projection of $f$ into
span$\{\Phi_\delta(\cdot,\bx_j):\bx_j \in X\}$, from which it
follows that
\begin{equation}\label{pythagore}
\| f - I_{X,\delta} f\|_{\Phi_\delta} \le \|f\|_{\Phi_\delta}.
\end{equation}
From Lemma~\ref{norm} we then have
\begin{equation}\label{orthoprop}
\|f - I_{X,\delta} f\|_{H^\sigma(\mS^d)} \le c_2 \delta^{-\sigma} \|f\|_{\Phi_\delta}.
\end{equation}
\begin{lemma}[Zeros Theorem]\label{zeros} Let $\Omega \subseteq \mS^d$
be either an open connected region with Lipschitz
boundary or $\Omega=\mS^{d}$. Assume that a finite set $X\subset\Omega$ has
a sufficiently small (local) mesh norm $h_{X,\Omega}$. Then,
for any function $u\in H^\sigma(\Omega)$, $\sigma>d/2$, with $u|_X =0$,
for all $0\le \nu \le \sigma$ we have
\[
\|u\|_{H^\nu(\Omega)} \le C h_{X,\Omega}^{\sigma-\nu}
\|u\|_{H^\sigma(\Omega)}.
\]
\end{lemma}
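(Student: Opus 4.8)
The plan is to reduce the assertion to the analogous ``zeros lemma'' on bounded domains in $\R^d$, which is available in \cite{Wen10} (see also \cite{Wen05}), by passing through the atlas $\cA=\{U_k,\psi_k\}_{k=1,2}$ and the partition of unity $\{\chi_1,\chi_2\}$, and then to reassemble the two local contributions by means of the atlas definition \eqref{def:local Sob} of $\|\cdot\|_{H^\sigma(\Omega)}$. The Euclidean ingredient needed is the following: if $D\subset\R^d$ is a bounded domain satisfying an interior cone condition (in particular, any bounded Lipschitz domain), $\sigma>d/2$, and $w\in H^\sigma(D)$ vanishes on a finite set $Y\subset D$ whose mesh norm $h_{Y,D}$ is sufficiently small, then $\|w\|_{H^\nu(D)}\le C\,h_{Y,D}^{\sigma-\nu}\|w\|_{H^\sigma(D)}$ for all $0\le\nu\le\sigma$; the fractional range of $\nu$ can, if necessary, be recovered by the operator interpolation argument already used in the proof of Theorem~\ref{extension}.

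To localise, fix $k\in\{1,2\}$ with $D_k=\psi_k(\Omega\cap U_k)\neq\emptyset$ and put $g_k:=\pi_k(\chi_k u)\big|_{D_k}\in H^\sigma(D_k)$. Because $u|_X=0$ we have $(\chi_k u)|_X=0$, and because $\chi_k u$ vanishes outside $\mathrm{supp}\,\chi_k$, the function $g_k$ vanishes on
\[
  Z_k := \psi_k\big(X\cap\Omega\cap U_k\big)\;\cup\;\big(D_k\setminus\psi_k(\mathrm{supp}\,\chi_k)\big)\subset D_k .
\]
The crucial geometric step is to check that $Z_k$ is $C\,h_{X,\Omega}$-dense in $D_k$ once $h_{X,\Omega}$ is small enough. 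Given $\bz=\psi_k(\bx)\in D_k$: if $\bx\notin\mathrm{supp}\,\chi_k$ then $\bz\in Z_k$; otherwise, a nearest point $\bx_{j_0}\in X$ to $\bx$ satisfies $\theta(\bx,\bx_{j_0})\le h_{X,\Omega}$, so $\bx_{j_0}$ lies within geodesic distance $h_{X,\Omega}$ of $\mathrm{supp}\,\chi_k$. Since the geodesic distance of $\mathrm{supp}\,\chi_k$ from $\partial U_k$ exceeds $C_\cA$, this forces $\bx_{j_0}\in\Omega\cap U_k$ as soon as $h_{X,\Omega}<C_\cA$, so that $\psi_k(\bx_{j_0})\in Z_k$; and since $\psi_k$ distorts geodesic distance by at most a fixed factor \cite{LeGNarWarWen06}, we get $\|\bz-\psi_k(\bx_{j_0})\|_2\le C\,h_{X,\Omega}$. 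The same estimate shows $h_{Z_k,D_k}\to0$ as $h_{X,\Omega}\to0$, so the smallness requirement of the Euclidean lemma is met for $h_{X,\Omega}$ small.

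Finally I would assemble. Applying the Euclidean zeros lemma to $g_k$ on $D_k$ — which inherits a Lipschitz boundary from $\Omega$ since $\psi_k$ is a $C^\infty$ diffeomorphism, and which equals $B(0,1)$ when $\Omega=\mS^d$ — gives $\|g_k\|_{H^\nu(D_k)}\le C\,h_{X,\Omega}^{\sigma-\nu}\|g_k\|_{H^\sigma(D_k)}$ for $k=1,2$. Squaring, summing over $k$, and invoking \eqref{def:local Sob} for the $H^\nu$- and the $H^\sigma$-norm on $\Omega$ then yields $\|u\|_{H^\nu(\Omega)}\le C\,h_{X,\Omega}^{\sigma-\nu}\|u\|_{H^\sigma(\Omega)}$, as claimed. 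I expect the main obstacle to be the bookkeeping at the chart boundaries in the density argument: one must keep the transferred zero set dense precisely on the region that carries $\mathrm{supp}\,g_k$ while allowing $\partial\Omega$ to cut through that region, and one must verify that $D_k$ (or a suitable Lipschitz subdomain containing $\mathrm{supp}\,g_k$) satisfies the interior cone condition required by the Euclidean result — both of which are exactly what the hypothesis that $h_{X,\Omega}$ be sufficiently small is there to ensure.
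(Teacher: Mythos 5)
Your argument is essentially correct in outline, but it is a genuinely different route from the paper's. The paper does not descend to charts at all: for $\Omega\subsetneq\mS^d$ it simply invokes the zeros (sampling) inequality for Lipschitz domains \emph{on a Riemannian manifold}, namely Theorem~A.11 of Hangelbroek--Narcowich--Ward \cite{HanNarWar12}, and for $\Omega=\mS^d$ it cites \cite{LeGNarWarWen06}. What you have written is, in effect, a from-scratch derivation of (a special case of) that manifold result by transporting the problem to $\R^d$ through the atlas and the partition of unity, applying the Euclidean sampling inequality of \cite{Wen10} on each $D_k$, and reassembling via the chart definition \eqref{def:local Sob} of the norms. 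The localisation and density steps are sound: enlarging the zero set of $g_k=\pi_k(\chi_k u)|_{D_k}$ by the region where $\chi_k$ vanishes, and using the $C_\cA$-gap between $\mathrm{supp}\,\chi_k$ and $\partial U_k$ together with the bounded metric distortion of $\psi_k$ to show that a finite $Ch_{X,\Omega}$-dense subset of $Z_k$ exists, is exactly the right mechanism, and the final summation over $k$ is immediate from \eqref{def:local Sob}. What the paper's citation buys is precisely the point you flag as your ``main obstacle'': the regularity of the transported domain. For a general open connected Lipschitz $\Omega$, the set $D_k=\psi_k(\Omega\cap U_k)$ need not inherit a Lipschitz boundary or an interior cone condition (the boundary of $\Omega$ can meet $\partial U_k$ tangentially and create cusps), and the constants and the smallness threshold for the fill distance in the Euclidean lemma depend on the cone parameters of the domain; your proof is only complete once you either restrict to the domains actually used later (spherical caps with $\theta<C_\cA/3$, whose closure lies in a single chart so that the problematic intersection carries no mass of $\chi_k u$) or replace $D_k$ by a cone-condition subdomain containing $\mathrm{supp}\,g_k$. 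So: correct strategy, more self-contained than the paper, but with one domain-regularity step still to be discharged that the paper avoids entirely by quoting the manifold version.
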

\begin{proof}
For $\Omega \subset \mS^d$ an open and connected set with Lipschitz
boundary, the proof follows from
the zeros lemma for Lipschitz domains
on a Riemannian manifold in \cite[Theorem A.11]{HanNarWar12}.
The case $\Omega = \mS^d$ was proved earlier in
\cite{LeGNarWarWen06}.
\end{proof}
\begin{theorem}
Let $\Omega \subseteq \mS^d$ be either a spherical cap that satisfies the conditions
of Theorem~\ref{extension} or $\Omega=\mS^d$. Assume that a finite set
$X\subset\Omega$ has a sufficiently small (local) mesh norm $h_{X,\Omega}$. Then,
\[
\|f - I_{X,\delta} f\|_{L_2(\Omega)} \le
C \delta^{-\sigma} h_{X,\Omega}^{\sigma}\|f\|_{H^\sigma(\Omega)}.
\]
In particular, when $\delta = 1$, we have
\[
\|f - I_{X} f\|_{L_2(\Omega)} \le
Ch^{\sigma}_{X,\Omega}\|f\|_{H^\sigma(\Omega)}.
\]
\end{theorem}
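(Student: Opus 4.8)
The plan is to reduce the estimate on the cap $\Omega$ to the known single‑scale interpolation bound by using the extension operator from Theorem~\ref{extension}. First, let $u=f-I_{X,\delta}f$; this function vanishes on $X$, and since $\sigma>d/2$ we may apply the Zeros Theorem (Lemma~\ref{zeros}) with $\nu=0$ on $\Omega$ to get $\|u\|_{L_2(\Omega)}\le C h_{X,\Omega}^{\sigma}\|u\|_{H^\sigma(\Omega)}$. It therefore suffices to bound $\|u\|_{H^\sigma(\Omega)}$, i.e.\ $\|f-I_{X,\delta}f\|_{H^\sigma(\Omega)}$, by $C\|f\|_{H^\sigma(\Omega)}$ (the $\delta^{-\sigma}$ factor will enter through the norm equivalence of Lemma~\ref{norm}). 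Note we cannot use \eqref{orthoprop} directly, because $I_{X,\delta}$ is the \emph{global} orthogonal projection but $f$ is only given on $\Omega$, so we must first extend.

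Second, apply the extension operator $E$ of Theorem~\ref{extension}: set $F=Ef\in H^\sigma(\mS^d)$ with $F|_\Omega=f$ and $\|F\|_{H^\sigma(\mS^d)}\le C_\sigma\|f\|_{H^\sigma(\Omega)}$. Since $X\subset\Omega$, the interpolant $I_{X,\delta}F$ agrees with $I_{X,\delta}f$ (both are determined solely by the values $F(\bx_j)=f(\bx_j)$), so on $\Omega$ we have $u=f-I_{X,\delta}f=(F-I_{X,\delta}F)|_\Omega$. Then
\[
\|u\|_{H^\sigma(\Omega)}\le \|F-I_{X,\delta}F\|_{H^\sigma(\mS^d)}
\le c_2\,\delta^{-\sigma}\|F-I_{X,\delta}F\|_{\Phi_\delta}
\le c_2\,\delta^{-\sigma}\|F\|_{\Phi_\delta}
\le c\,\delta^{-\sigma}\|F\|_{H^\sigma(\mS^d)},
\]
using Lemma~\ref{norm} (right inequality), the Pythagorean bound \eqref{pythagore}, and Lemma~\ref{norm} again (left inequality, which gives $\|F\|_{\Phi_\delta}\le c_1^{-1}\|F\|_{H^\sigma(\mS^d)}$). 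Combining with the extension bound yields $\|u\|_{H^\sigma(\Omega)}\le C\,\delta^{-\sigma}\|f\|_{H^\sigma(\Omega)}$.

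Third, chain the two estimates: $\|f-I_{X,\delta}f\|_{L_2(\Omega)}\le C h_{X,\Omega}^\sigma\|u\|_{H^\sigma(\Omega)}\le C\delta^{-\sigma}h_{X,\Omega}^\sigma\|f\|_{H^\sigma(\Omega)}$, which is the claim; the case $\delta=1$ is immediate. When $\Omega=\mS^d$ no extension is needed and one argues directly with \eqref{orthoprop} and Lemma~\ref{zeros}. The only mild technical point is the compatibility of the ``sufficiently small mesh norm'' hypotheses between Lemma~\ref{zeros} and the implicit requirements of the extension construction, and checking that the constant in Theorem~\ref{extension} is $\nu$‑uniform here (it is, as stated). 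I do not expect a genuine obstacle; the essential content is entirely carried by the extension theorem plus the single‑scale orthogonality, and the proof is short.
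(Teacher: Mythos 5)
Your proposal is correct and follows essentially the same route as the paper: the Zeros Theorem with $\nu=0$ on $\Omega$, then the extension operator of Theorem~\ref{extension} combined with the observation that $I_{X,\delta}f$ and $I_{X,\delta}(Ef)$ coincide on $\Omega$, then the orthogonality/norm-equivalence bound (your chain via \eqref{pythagore} and Lemma~\ref{norm} is exactly the derivation of \eqref{orthoprop}, which the paper cites directly), and finally the extension bound. Your explicit remark on why the extension is needed before invoking \eqref{orthoprop} matches the logic of the paper's argument.
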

\begin{proof}
Let $u:=f-I_{X,\delta} f$, then $u|_X = 0$. Using Lemma~\ref{zeros}, we
have
\[
\|f - I_{X,\delta} f\|_{L_2(\Omega)} \le
Ch^{\sigma}_{X,\Omega}\|f - I_{X,\delta} f\|_{H^\sigma(\Omega)}.
\]
If $\Omega$ is a spherical cap our assumptions on $\Omega$ allow us to
extend the function $f\in H^\sigma(\Omega)$ to a function $Ef \in
H^\sigma(\mS^d)$. Moreover, since $X \subset \Omega$ and $Ef|_{\Omega} =
f|_{\Omega}$, the interpolant $I_{X,\delta} f$ coincides with the
interpolant $I_{X,\delta} (Ef)$ on $\Omega$. Therefore,
\begin{align*}
\|f-I_{X,\delta}f\|_{H^\sigma(\Omega)} &=
\|Ef-I_{X,\delta}(Ef)\|_{H^\sigma(\Omega)}
\le C\|Ef-I_{X,\delta}(Ef)\|_{H^\sigma(\mS^d)} \\
&\le  C \delta^{-\sigma}\|Ef\|_{\Phi_\delta}
\;\;\qquad\qquad\text{ by \eqref{orthoprop}}\\
&\le C\delta^{-\sigma}\|Ef\|_{H^\sigma(\mS^d)}
\quad\qquad\text{ by Lemma~\ref{norm}} \\
&\le C \delta^{-\sigma}\|f\|_{H^\sigma(\Omega)}
\qquad\qquad\text{ by Theorem~\ref{extension}}.
\end{align*}
The case $\Omega = \mS^d$ was proved in
\cite[Theorem 3.2]{LeGSloWen10}.
\end{proof}

\subsection{The global and local multiscale algorithm}
Suppose $X_1, X_2, \ldots, X_m \subset \mS^d$ is a sequence of finite
point sets with mesh norms $h_1,h_2,\ldots, h_m$ respectively. The mesh
norms are assumed to satisfy $h_{j+1} \approx \mu h_j$ for some fixed $\mu
\in (0,1)$. After that, suppose $X_{m+1},X_{m+2},\ldots,X_n \subset
\Omega$ is a sequence of point sets with local mesh norms
$h_{m+1,\Omega},\ldots, h_{n,\Omega}$, where $\Omega \subset \mS^d$ is
some open connected subset.  In future we will write 
$h_j$ for $h_{X_j,\Omega}$
for all $j=1,\ldots,n$.

Let $\delta_1,\delta_2,\ldots$ be a decreasing sequence of positive real
numbers defined by $\delta_j = \nu h_j$ for some $\nu > 0$. Taking the
scale proportional to the mesh norm in this way is desirable for both
numerical stability and efficiency, since the sparsity of the
interpolation matrix is maintained. For every $j=1,2,\ldots$ we define the
scaled SBF $\Phi_j := \Phi_{\delta_j}$, and also define the scaled
approximation space $W_j = \text{span} \{\Phi_j(\cdot,\bx): \bx \in
X_j\}$.

We start with a widely spread set of points on the global scale and use a
basis function with scale $\delta_1$ to recover the global behavior of the
function $f$ by computing $f_1=s_1: = I_{X_1,\delta_1} f$. The error, or
residual, at the first step is $e_1 = f-f_1$. To reduce the error, at the
next step we use a finer set of points $X_2$ and a finer scale $\delta_2$,
and compute a correction $s_2 = I_{X_2,\delta_2} e_1$ and a new
approximation $ f_2 = f_1 + s_2$, so that the new residual is $e_2 = f-f_2 = e_1
- I_{X_2,\delta_2} e_1$; and so on. After $m$ global steps we switch to
local refinement, i.e. from step $(m+1)$ onwards the set $X_{m+1}$ is localized
to a small region $\Omega$ on the sphere, and the new correction $s_{m+1}$
is constructed from the local space $W_{m+1}$ and the new approximation is
$f_{m+1} = f_m+ s_{m+1}$. The multiscale algorithm then is continued for
a further local $n-m$ steps.

\begin{algorithm}[!htbp]
\KwData{Right hand side $f$, number of levels $n$}
\Begin{
  Set $f_0 = 0$, $e_0 = f$. \\
  \For{$j=1,2,\ldots,n$}{
    Determine the (global or local) interpolant $s_j \in W_j$ to $e_{j-1}$\\
    Set $f_j = f_{j-1} + s_j$.\\
    Set $e_j = e_{j-1} - s_j$.
  }
}
\KwResult{Approximation solution $f_n \in W_1 + \cdots + W_n$}
\caption{Multiscale global/local algorithm}
\end{algorithm}

\noindent {\bf Remark} Clearly, we could continue the algorithm by choosing an even
smaller region $\Omega^\prime \subset \Omega$ and a sequence of point sets
$X_{n+1},X_{n+2},\ldots \subset \Omega^\prime$, and so on, until a desired
resolution is reached. For simplicity of presentation, we restrict
ourselves to 
the situation of one zooming-in region $\Omega$ 
in the subsequent error
analysis (though not in the numerical example). Extension of the
convergence theory to the general case is trivial.



We will show convergence for the scheme within a spherical cap
$\Omega$.

\begin{theorem}[Convergence for functions in $H^\sigma(\mS^d)$]
\label{thm:convergence1}
  Let $X_1,\ldots,X_m$ be a sequence of point sets in $\mS^d$
  and let $X_{m+1},\ldots,X_n$ be a sequence of point sets in
  $\Omega \subset \mS^d$, where $\Omega$ satisfies the requirements
  in Theorem~\ref{extension}. Assume that we are performing $m$ steps
  of the global multilevel algorithm on $\mS^{d}$ and then $n-m$
  steps of the local multilevel algorithm, localised to $\Omega$.
  Let $h_1,\ldots,h_m$ be the global mesh norms and
  $h_{m+1},\ldots,h_n$ be the local mesh norms of the sets
  $X_1,\ldots,X_m$ and $X_{m+1},\ldots,X_n$, respectively, and
  assume that, for some
  $\mu\in(0,1)$, $h_{j+1}=\mu h_j$, for each $j=1,\ldots,n-1$.

  Let $\Phi$ be a kernel generating $H^\sigma(\mS^d)$ and  let
  $\Phi_j := \Phi_{\delta_j}$ be defined by \eqref{scaledPhi}
  with scale factor $\delta_j = \nu h_j$ where
  $1/h_1 \ge \nu \ge \gamma/\mu \ge 1$ with a fixed $\gamma>0$.
Assume that the target function
  $f$ belongs to $H^\sigma(\mS^d)$.

 Then the algorithm converges in the $L_2(\Omega)$ sense linearly in the number
 of levels. To be more precise, there is a constant $C>0$ and a
 constant $\alpha>0$, where $\alpha<1$ for $\mu$ sufficiently small, such that
\[
\|f-f_n\|_{L_2(\Omega)} \le C \alpha^n \|f\|_{H^\sigma(\mS^d)}
\]
for all $f\in H^\sigma(\mS^{d})$.
\end{theorem}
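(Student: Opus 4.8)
\emph{Overall strategy.} I would track, level by level, an ``energy'' quantity that contracts by a fixed factor. For the global levels $j\le m$ I would use $a_j:=\|e_j\|_{\Phi_{j+1}}$; for the local levels $j>m$ I would use $a_j:=\|e_j\|_{L_2(\Omega)}+\delta_{j+1}^{\sigma}\|e_j\|_{H^\sigma(\Omega)}$. On $\mS^d$ the two are comparable, because \eqref{cond:whatdel} gives $1/\widehat{\phi_\delta}(\ell)\asymp(1+\delta\ell)^{2\sigma}\asymp1+(\delta\ell)^{2\sigma}$, so that $\|u\|_{\Phi_{\delta}}^2\le C\bigl(\|u\|_{L_2(\mS^d)}^2+\delta^{2\sigma}\|u\|_{H^\sigma(\mS^d)}^2\bigr)$ for $\delta\le1$. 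The aim is the recursion $a_j\le\alpha\,a_{j-1}$ for all $j=1,\dots,n$ with $\alpha=C(\mu^{\sigma}+\nu^{-\sigma})$, together with the initial bound $a_0=\|f\|_{\Phi_1}\le c_1^{-1}\|f\|_{H^\sigma(\mS^d)}$ (from Lemma~\ref{norm}) and the trivial final bound $\|f-f_n\|_{L_2(\Omega)}=\|e_n\|_{L_2(\Omega)}\le a_n$. Combining these yields $\|f-f_n\|_{L_2(\Omega)}\le C\alpha^n\|f\|_{H^\sigma(\mS^d)}$; and since $\nu\ge\gamma/\mu$ forces $\nu^{-\sigma}\le(\mu/\gamma)^{\sigma}$, we obtain $\alpha\le C(1+\gamma^{-\sigma})\mu^{\sigma}<1$ for $\mu$ small. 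Note all scales satisfy $\delta_j=\nu h_j\le\nu h_1\le1$, as required for \eqref{cond:whatdel}, and $h_j=\mu^{j-1}h_1$ decreases, so the smallness hypothesis of Lemma~\ref{zeros} holds at every level.

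\emph{Global levels $1\le j\le m$.} Here $e_j=e_{j-1}-I_{X_j,\delta_j}e_{j-1}$ vanishes on $X_j\subset\mS^d$, so Lemma~\ref{zeros} (with $\Omega=\mS^d$) gives $\|e_j\|_{L_2(\mS^d)}\le Ch_j^{\sigma}\|e_j\|_{H^\sigma(\mS^d)}$, Lemma~\ref{norm} gives $\|e_j\|_{H^\sigma(\mS^d)}\le c_2\delta_j^{-\sigma}\|e_j\|_{\Phi_j}$, and \eqref{pythagore} gives $\|e_j\|_{\Phi_j}\le\|e_{j-1}\|_{\Phi_j}=a_{j-1}$. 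Inserting these into the comparison estimate above gives $a_j^2\le C(h_j^{2\sigma}+\delta_{j+1}^{2\sigma})\delta_j^{-2\sigma}a_{j-1}^2=C(\nu^{-2\sigma}+\mu^{2\sigma})a_{j-1}^2$, i.e.\ $a_j\le\alpha a_{j-1}$. (This is the argument of \cite{LeGSloWen10,LeGSloWen12}, which could be quoted directly.) Hence $a_m\le C\alpha^m\|f\|_{H^\sigma(\mS^d)}$.

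\emph{Transition and local levels.} At the seam, $\|e_m\|_{L_2(\Omega)}\le\|e_m\|_{L_2(\mS^d)}\le c_2\|e_m\|_{\Phi_{m+1}}$ (since $\widehat{\phi_{\delta_{m+1}}}(\ell)\le c_2^2$) and $\delta_{m+1}^{\sigma}\|e_m\|_{H^\sigma(\Omega)}\le C\delta_{m+1}^{\sigma}\|e_m\|_{H^\sigma(\mS^d)}\le Cc_2\|e_m\|_{\Phi_{m+1}}$ (the restriction inequality $\|v|_\Omega\|_{H^\sigma(\Omega)}\le C\|v\|_{H^\sigma(\mS^d)}$ plus Lemma~\ref{norm}), so the local energy $a_m$ is also bounded by $C\alpha^m\|f\|_{H^\sigma(\mS^d)}$. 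Now fix $j>m$. As $e_{j-1}\in H^\sigma(\mS^d)\subset C(\mS^d)$, put $g_{j-1}:=E(e_{j-1}|_\Omega)\in H^\sigma(\mS^d)$ with $g_{j-1}|_\Omega=e_{j-1}|_\Omega$ and $\|g_{j-1}\|_{H^\nu(\mS^d)}\le C_\nu\|e_{j-1}\|_{H^\nu(\Omega)}$ for $\nu\in\{0,\sigma\}$ (Theorem~\ref{extension}). Since $X_j\subset\Omega$ and an interpolant is determined by its point values, $I_{X_j,\delta_j}e_{j-1}=I_{X_j,\delta_j}g_{j-1}$, so $e_j=g_{j-1}-I_{X_j,\delta_j}g_{j-1}$ on $\Omega$. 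Then Lemma~\ref{zeros} on $\Omega$ (applied to $e_j|_\Omega$, which vanishes on $X_j$) gives $\|e_j\|_{L_2(\Omega)}\le Ch_j^{\sigma}\|e_j\|_{H^\sigma(\Omega)}$; the restriction inequality, Lemma~\ref{norm}, and \eqref{pythagore} give $\|e_j\|_{H^\sigma(\Omega)}\le C\|g_{j-1}-I_{X_j,\delta_j}g_{j-1}\|_{H^\sigma(\mS^d)}\le Cc_2\delta_j^{-\sigma}\|g_{j-1}\|_{\Phi_j}$; and the comparison estimate for $\|\cdot\|_{\Phi_{\delta_j}}$ together with the boundedness of $E$ on $L_2(\Omega)$ and $H^\sigma(\Omega)$ give $\|g_{j-1}\|_{\Phi_j}^2\le C\bigl(\|e_{j-1}\|_{L_2(\Omega)}^2+\delta_j^{2\sigma}\|e_{j-1}\|_{H^\sigma(\Omega)}^2\bigr)\le Ca_{j-1}^2$. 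Chaining these exactly as in the global case yields $a_j\le C(\nu^{-\sigma}+\mu^{\sigma})a_{j-1}=\alpha a_{j-1}$; the seam contributes only a harmless one-time constant.

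\emph{Main obstacle.} The hard part is the local step -- making the global error-reduction mechanism survive localization. Two observations make it work: (i) the multiscale correction at a local level uses only the values of $e_{j-1}$ inside $\Omega$, so it coincides with the interpolant of the extension $g_{j-1}=E(e_{j-1}|_\Omega)$, which permits running the Pythagorean and native-space estimates globally on $g_{j-1}$ while invoking the zeros lemma only on $\Omega$; and (ii) because $E$ is one fixed operator, bounded on $H^\nu(\Omega)$ for every $\nu\ge0$ with norm independent of $\nu$, and $\|v|_\Omega\|_{H^\sigma(\Omega)}\le C\|v\|_{H^\sigma(\mS^d)}$, every constant in the recursion is independent of the level $j$, so $\alpha$ does not degrade as one zooms in -- the same argument applies verbatim to a further nested region $\Omega'\subset\Omega$.
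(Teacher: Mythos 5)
Your proposal is correct and follows essentially the same route as the paper: your comparison estimate $\|u\|_{\Phi_\delta}^2\le C(\|u\|_{L_2}^2+\delta^{2\sigma}\|u\|_{H^\sigma}^2)$ is precisely the paper's low/high-frequency split $I_1+I_2$ of $\|Ee_j\|_{\Phi_{j+1}}^2$, your tracked energy $a_j$ is equivalent to the paper's $\|Ee_j\|_{\Phi_{j+1}}$, and the chain (zeros lemma on $\Omega$, extension operator, Pythagorean property \eqref{pythagore}, Lemma~\ref{norm}) yielding the contraction factor $C(\nu^{-\sigma}+\mu^{\sigma})$ is identical. The only cosmetic deviations are that you handle the seam by extending $e_m|_\Omega$ where the paper deliberately avoids the extension at that one step, and your final bound $\|e_n\|_{L_2(\Omega)}\le a_n$ is immediate rather than routed back through $\|Ee_n\|_{\Phi_{n+1}}$; neither changes the substance.
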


The theorem is a generalisation of the main result in \cite{LeGSloWen10}.

In preparation for the proof of the theorem we first prove the following
technical lemma.

\begin{lemma}
\label{key native}
Let $e_j$ for $j=0,\ldots,n$ be as in Algorithm 1,
and let $E$ be the extension operator from $\Omega$ to
$\mS^d$ as defined in Theorem~\ref{extension}. Then
\begin{enumerate}
\item[(i)] $\|e_j\|_{H^\sigma(\mS^d)} \le C\delta^{-\sigma}_j \|e_{j-1}\|_{\Phi_j}$ for $j=1,\ldots,m,$
\item[(ii)] $\|e_{m+1}\|_{H^\sigma(\Omega)}
\le C\delta^{-\sigma}_{m+1} \|e_{m}\|_{\Phi_{m+1}}$ ,
\item[(iii)] $\|e_j\|_{H^\sigma(\Omega)} \le C\delta^{-\sigma}_j \|Ee_{j-1}\|_{\Phi_j}$ for $j=m+2,\ldots,n.$
\end{enumerate}
\end{lemma}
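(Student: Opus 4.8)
The plan is to treat the three cases in order, each one building on the structure of Algorithm~1, namely that $e_j = e_{j-1} - s_j$ where $s_j = I_{X_j,\delta_j} e_{j-1}$ is the (global or local) interpolant of the current residual. In every case the residual $e_j$ vanishes on $X_j$, so the Zeros Theorem (Lemma~\ref{zeros}) and the orthogonal-projection bound \eqref{orthoprop} will be the two workhorses; the only subtlety is keeping track of whether we are measuring norms over $\mS^d$ or over $\Omega$, and when we need the extension operator $E$ to move between them.

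For part (i), $j=1,\ldots,m$, everything is global: $e_{j-1}\in H^\sigma(\mS^d)$, the interpolant $s_j = I_{X_j,\delta_j} e_{j-1}$ is defined on all of $\mS^d$, and $e_j = e_{j-1} - I_{X_j,\delta_j} e_{j-1}$ is exactly the residual after orthogonal projection in the $\|\cdot\|_{\Phi_j}$ inner product. Then \eqref{orthoprop}, applied with $f = e_{j-1}$, gives immediately $\|e_j\|_{H^\sigma(\mS^d)} \le c_2\,\delta_j^{-\sigma}\|e_{j-1}\|_{\Phi_j}$, which is the claim. For part (ii), the single transitional step $j=m+1$: here $s_{m+1} = I_{X_{m+1},\delta_{m+1}} e_m$ is a \emph{local} interpolant built from $W_{m+1}$, so $e_{m+1} = e_m - s_{m+1}$ need not vanish outside $\Omega$, but $e_{m+1}|_{X_{m+1}} = 0$ and $X_{m+1}\subset\Omega$. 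The idea is to restrict everything to $\Omega$ and apply the same projection argument there: since $I_{X_{m+1},\delta_{m+1}}$ coincides with $I_{X_{m+1},\delta_{m+1}}(E(e_m|_\Omega))$ on $\Omega$, one repeats the chain of inequalities from the proof of the single-scale Theorem (extend, use \eqref{orthoprop} on $\mS^d$, use Lemma~\ref{norm}, use Theorem~\ref{extension}), with $f = e_m$, to get $\|e_{m+1}\|_{H^\sigma(\Omega)} \le C\,\delta_{m+1}^{-\sigma}\|e_m\|_{\Phi_{m+1}}$. Note the norm on the right is still the global $\|\cdot\|_{\Phi_{m+1}}$ norm of $e_m$, because $e_m$ is a global function at this stage.

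For part (iii), $j = m+2,\ldots,n$, we are fully in the local regime: $e_{j-1}$ is now only controlled on $\Omega$ (it is the output of a previous local step), so to run the projection argument we must first extend it via $E$ to $Ee_{j-1}\in H^\sigma(\mS^d)$. The key observation is that $I_{X_j,\delta_j} e_{j-1} = I_{X_j,\delta_j}(Ee_{j-1})$ on $\Omega$, since $X_j\subset\Omega$ and $Ee_{j-1}$ agrees with $e_{j-1}$ there; hence on $\Omega$, $e_j = Ee_{j-1} - I_{X_j,\delta_j}(Ee_{j-1})$. Applying the same four-step chain as above --- $\|e_j\|_{H^\sigma(\Omega)} \le C\|Ee_{j-1} - I_{X_j,\delta_j}(Ee_{j-1})\|_{H^\sigma(\mS^d)} \le C\delta_j^{-\sigma}\|Ee_{j-1}\|_{\Phi_j}$ by \eqref{orthoprop} --- delivers the stated bound.

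The main obstacle, and the only place requiring care, is the bookkeeping at the transition (parts (ii) and (iii)): one must be sure that the local interpolant of a function agrees on $\Omega$ with the global interpolant of any extension of that function, so that the global orthogonality/projection machinery \eqref{orthoprop} can legitimately be invoked. This is precisely the point already exploited in the proof of the single-scale Theorem above, so no new idea is needed --- but one should state explicitly that $E$ is applied to $e_{j-1}|_\Omega$ and that the resulting interpolant restricted to $\Omega$ is independent of which extension is chosen. A minor secondary point is that all the hidden constants $C$ absorb the $\nu$-independent constants from Theorem~\ref{extension}, Lemma~\ref{norm}, and Lemma~\ref{zeros}; since $\delta_j = \nu h_j$ with $\nu$ fixed, the factors $\delta_j^{-\sigma}$ are displayed explicitly and everything else is uniform in $j$.
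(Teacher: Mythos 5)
Your proposal is correct and follows essentially the same route as the paper: part (i) is a direct application of \eqref{orthoprop}, part (iii) extends $e_{j-1}$ by $E$, uses that the interpolant of $e_{j-1}$ and of $Ee_{j-1}$ coincide, bounds the $H^\sigma(\Omega)$ norm by the $H^\sigma(\mS^d)$ norm, and applies \eqref{orthoprop}. One caution on part (ii): the paper deliberately does \emph{not} introduce the extension operator there, because $e_m$ is still globally defined; if you literally ran the chain through $E(e_m|_\Omega)$ you would end up with $\|E(e_m|_\Omega)\|_{\Phi_{m+1}}$ on the right, and converting that back to $\|e_m\|_{\Phi_{m+1}}$ via Lemma~\ref{norm} and Theorem~\ref{extension} costs an extra factor $\delta_{m+1}^{-\sigma}$, which would ruin the recursion \eqref{rec}. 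Your closing observation that $e_m$ is global is the correct fix — simply apply \eqref{orthoprop} to $e_m$ on $\mS^d$ and restrict to $\Omega$, with no extension at all.
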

\begin{proof}
For $j=1,\ldots,m$ we have, using \eqref{orthoprop},
\[
\|e_j\|_{\HsigS} = \|e_{j-1}-I_{X_j,\delta_j} e_{j-1}\|_{\HsigS} \le C\delta_j^{-\sigma} \|e_{j-1}\|_{\Phi_j}.
\]
For $j=m+2,\ldots,n$ we have, by using the property of the extension operator
and \eqref{orthoprop}
\begin{align*}
\|e_j\|_{\HsigO} = \|e_{j-1}-I_{X_j,\delta_j} e_{j-1}\|_{\HsigO}
                           & = \|Ee_{j-1}-I_{X_j,\delta_j} Ee_{j-1}\|_{\HsigO} \\
                           &\le C\|Ee_{j-1}-I_{X_j,\delta_j} Ee_{j-1}\|_{\HsigS}\\
                           &\le C\delta_j^{-\sigma} \|E e_{j-1}\|_{\Phi_{j}}.
\end{align*}
For the intermediate case, when $j=m+1$, we avoid the extension operator by arguing as follows
\begin{align*}
\|e_{m+1}\|_{\HsigO} = \|e_m - I_{X_{m+1},\delta_{m+1}} e_m\|_{\HsigO}
 & \le C \|e_m - I_{X_{m+1},\delta_{m+1}} e_m\|_{\HsigS}\\
 & \le C\delta_{m+1}^{-\sigma} \|e_{m}\|_{\Phi_{m+1}}.
\end{align*}
\end{proof}

\begin{proofof}{Theorem~\ref{thm:convergence1}}
In the proof we use repeatedly the fact that $e_j|X_j = 0$, allowing us to
use the zeros theorem (Lemma~\ref{zeros}), and we also make essential use
of the extension theorem (Theorem~\ref{extension}) and
Lemma~\ref{key native}.

We start by noting that
\begin{eqnarray}
\|f-f_n\|_{L_2(\Omega)}=
\|e_n\|_{L_2(\Omega)}  &\le &  C h_n^{\sigma} \| e_n\|_{H^\sigma(\Omega)}
=C h^\sigma_n \|E e_n\|_{\HsigO} \nonumber \\
&\le& Ch^\sigma_n \|Ee_n\|_{H^\sigma(\mS^{d})} \nonumber \\
&\le& Ch^\sigma_n \delta^{-\sigma}_{n+1} \|Ee_n\|_{\Phi_{n+1}} \nonumber\\
&=& C \|Ee_n\|_{\Phi_{n+1}}, \label{ferror}
\end{eqnarray}
where in the second-last step we use Lemma~\ref{norm} with $\delta =
\delta_{n+1}$, and in the last step $h_n/\delta_{n+1}=1/(\mu\nu)\le
1/\gamma$.

The result will then follow by establishing the recursions
\begin{eqnarray}
\|e_j\|_{\Phi_{j+1}} &\le& \alpha \|e_{j-1}\|_{\Phi_j}, \quad j=1,\ldots,m \label{rec1}\\
\|Ee_{m+1}\|_{\Phi_{m+2}} &\le& \alpha \|e_{m}\|_{\Phi_{m+1}}, \label{rec}\\
\|E e_j\|_{\Phi_{j+1}} &\le& \alpha \|Ee_{j-1}\|_{\Phi_j}, \quad j=m+2,\ldots,n \label{rec2},
\end{eqnarray}
where $\alpha$ is some real number satisfying $0 < \alpha <1$.

The first of these is exactly as in \cite{LeGSloWen10}. We shall prove the
third recursion \eqref{rec2}, noting that \eqref{rec1} can then be
recovered by replacing $\Omega$ by $\mS^d$ and omitting the extension
operator $E$.

For $j=m+2,\ldots,n$ we have by definition of the norm
\eqref{def:scaledPhinorm} together with \eqref{cond:whatdel}
\begin{eqnarray*}
\|Ee_j\|^2_{\Phi_{j+1}}&=&
\sum_{\ell=0}^{ \infty} ~\sum_{k=1}^{N(d,\ell)}
\frac{|(\widehat{Ee_j})_{\ell,k}|^2}{\widehat{\phi_{\delta_{j+1}}}(\ell)} \\
&\le& C \sum_{\ell=0}^\infty
\sum_{k=1}^{N(d,\ell)}
~|(\widehat{Ee_j})_{\ell, k} |^2~(1+\delta_{j+1}\ell)^{2\sigma}\\
&=:&I_1 + I_2,
\end{eqnarray*}
where
\begin{align*}
I_1 & = C \sum_{\ell \le 1/\delta_{j+1}}
\sum_{k=1}^{N(d,\ell)}~|(\widehat{Ee_j})_{\ell, k} |^2~(1+\delta_{j+1}\ell)^{2\sigma}\\
& \le C 2^{2\sigma} \sum_{\ell=0}^\infty
\sum_{k=1}^{N(d,\ell)}~|(\widehat{Ee_j})_{\ell, k} |^2 = C \|E e_j\|^2_{L_2(\mS^d)}\\
&\le C \|e_j\|^2_{L_2(\Omega)}
\hspace{4.1cm} \text{ by Theorem~\ref{extension}}\\
&\le C h_j^{2\sigma} \|e_j\|^2_{\HsigO}
\hspace{3.45cm}\text{ by Lemma~\ref{zeros}}\\
&\le C \left(\frac{h_j}{\delta_j}\right)^{2\sigma} \|E e_{j-1}\|^2_{\Phi_j}
\hspace{2.4cm}\text{ by Lemma~\ref{key native} iii)} \\
& = C \nu^{-2\sigma} \|E e_{j-1} \|^2_{\Phi_j}
 \le C \mu^{2\sigma}\|E e_{j-1} \|^2_{\Phi_j},
\end{align*}
and
\begin{align*}
I_2 & = C \sum_{\ell  > 1/\delta_{j+1}}~\sum_{k=1}^{N(d,\ell)}
~|(\widehat{Ee_j})_{\ell, k} |^2~(1+\delta_{j+1}\ell)^{2\sigma}\\
& \le C (2\delta_{j+1})^{2\sigma} \sum_{\ell=0}^\infty ~\sum_{k=1}^{N(d,\ell)}
~|(\widehat{Ee_j})_{\ell, k} |^2~ \ell^{2\sigma}\\
&\le C \delta^{2\sigma}_{j+1} \|Ee_j\|^2_{\HsigS}
\le C\delta^{2\sigma}_{j+1} \|e_j\|^2_{\HsigO} \text{ by Theorem~\ref{extension}}\\
&\le C\left(\frac{\delta_{j+1}}{\delta_j}\right)^{2\sigma} \|E e_{j-1} \|^2_{\Phi_j}
\hspace{2.3cm}\text{ by Lemma~\ref{key native} iii)} \\
&= C \mu^{2\sigma} \|E e_{j-1} \|^2_{\Phi_j}.
\end{align*}
Thus we have proved $\|Ee_j\|_{\Phi_{j+1}}\le
C\mu^{-\sigma}\|Ee_{j-1}\|_{\Phi_j}$. With $\mu$ small enough, we can
choose $\alpha=C \mu^\sigma < 1$, so proving the recursion \eqref{rec2}.

Next, we discuss the switch over from global to local. We have
\[
e_{m+1}=e_{m} - I_{X_{m+1}, \delta_{m+1}} e_{m}, \quad \mbox{with} \; X_m \subset \mS^{d},
X_{m+1} \subset \Omega.
\]
As before, we decompose
\[
\| Ee_{m+1} \|^2_{\Phi_{m+2}} = I_1 + I_2.
\]
Now we have
\begin{eqnarray*}
I_1 &\le& C \|Ee_{m+1} \|^2_{L_{2}(\mS^d)} \\
&\le& C \| e_{m+1} \|^2_{L_{2}(\Omega)} \\
&\le& Ch_{m+1}^{2\sigma} \| e_{m+1} \|_{H^\sigma (\Omega)}^2
\hspace{1.2cm}\text{ by Lemma~\ref{zeros} }\\
&\le& C \left(\frac{h_{m+1}}{\delta_{m+1}}\right)^{2\sigma} \|e_{m} \|_{\Phi_{m+1}}^2
 \hspace{0.56cm}  \text{ by Lemma~\ref{key native} ii)}.
\end{eqnarray*}
The second term can be bounded by
\begin{eqnarray*}
I_2 &\le& C \delta_{m+2}^{2\sigma} \| Ee_{m+1} \|_{H^{\sigma} (\mS^{d})}^2
\le C \delta_{m+2}^{2 \sigma} \| e_{m+1} \|_{H^{\sigma} (\Omega)}^2\\
&\le&C \left(\frac {\delta_{m+2}}{\delta_{m+1}}\right)^{2\sigma}
\|e_{m}\|_{\Phi_{m+1}}^2 \quad\text{ by Lemma~\ref{key native} ii)}.
\end{eqnarray*}
Hence we find
\[
\|E e_{m+1} \|_{\Phi_{m+2}}^2 \le C \mu^{2\sigma} \| e_{m} \|_{\Phi_{m+1}}^2,
\]
and this can be no larger than $\alpha^2\|e_{m}\|_{\Phi_{m+1}}^2$ for
$\mu$ sufficiently small.

The first recursion \eqref{rec1} follows by the same proof if $E$ is
omitted and $\Omega$ is replaced by $\mS^d$.

Taken in the reverse order, the recursive steps \eqref{rec2}, \eqref{rec}
and \eqref{rec1} give
\begin{align*}
\|E e_n\|_{\Phi_{n+1}} &\le \alpha^{n-m-1} \|E e_{m+1}\|_{\Phi_{m+2} } \\
    & \le \alpha^{n-m} \|e_m\|_{\Phi_{m+1}} \\
    & \le \alpha^n \|e_0\|_{\Phi_1} = \alpha^n \|f\|_{\HsigS},
\end{align*}
which together with \eqref{ferror} proves the desired result.
\end{proofof}

The following result on the condition numbers of the matrices is
adapted from \cite[Theorem 7.3]{LeGSloWen10}.
\begin{theorem}
Assume that the conditions in Theorem \ref{thm:convergence1} hold, together with
\[
q_j \le h_j \le c_q q_j \mbox{ for }j=1,2,\ldots,n \mbox{ with }c_q > 1.
\]
There exists $C>0$ such that the condition number of the interpolation
matrices at each level of the multiscale approximation in Algorithm 1 are
bounded by
\[
\kappa \le  C, \quad j=1,\ldots,n.
\]
\end{theorem}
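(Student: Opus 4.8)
The plan is to bound, uniformly in the level $j$, the extreme eigenvalues of the symmetric interpolation matrix $A_j:=\big[\Phi_{\delta_j}(\bx_p,\bx_q)\big]_{\bx_p,\bx_q\in X_j}$ (positive definite, since $\Phi_{\delta_j}$ is a positive definite kernel by \eqref{cond:whatdel} and the points of $X_j$ are distinct), so that $\kappa(A_j)=\lambda_{\max}(A_j)/\lambda_{\min}(A_j)$ in the spectral norm; it then suffices to prove $\lambda_{\max}(A_j)\le C_1\delta_j^{-d}$ and $\lambda_{\min}(A_j)\ge C_2\delta_j^{-d}$ with $C_1,C_2>0$ independent of $j$. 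What makes the constants $j$-independent is the new hypothesis $q_j\le h_j\le c_qq_j$: together with $\delta_j=\nu h_j$ it makes the separation radius $q_j$, the mesh norm $h_j$ and the scale $\delta_j$ mutually comparable with ratios depending only on the fixed numbers $\nu$ and $c_q$ (in particular $\nu\le\delta_j/q_j\le\nu c_q$ and $\nu^d\delta_j^{-d}\le q_j^{-d}\le(\nu c_q)^d\delta_j^{-d}$). Moreover $A_j$ depends only on $X_j$ and $\delta_j$, so the argument of \cite[Theorem~7.3]{LeGSloWen10} applies equally at the global levels $j\le m$ and the local levels $j>m$.

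For the upper bound I would use Gershgorin's theorem. Writing $\operatorname{supp}\Pi\subset B(0,1)\subset\R^{d+1}$, the entry $\Phi_{\delta_j}(\bx_p,\bx_q)=\delta_j^{-d}\Pi\big((\bx_p-\bx_q)/\delta_j\big)$ vanishes unless $\theta(\bx_p,\bx_q)\lesssim\delta_j$, and is then at most $\delta_j^{-d}\|\Pi\|_\infty$. A standard packing estimate on $\mS^d$ shows a spherical cap of radius $\sim\delta_j$ contains at most $C(\nu c_q)^d$ points of $X_j$, since these are $q_j$-separated with $q_j\ge\delta_j/(\nu c_q)$. Hence every absolute row sum of $A_j$ is $\le C\delta_j^{-d}$, which gives $\lambda_{\max}(A_j)\le C_1\delta_j^{-d}$ with $C_1$ depending only on $d$, $\Pi$, $\nu$, $c_q$.

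For the lower bound I would pass to the ambient Fourier transform. Since $\tau>(d+1)/2$, the transform $\widehat{\Pi}$ is integrable (because $2\tau>d+1$) and strictly positive, so Fourier inversion together with the substitution $\boldsymbol\omega=\delta_j\boldsymbol\zeta$ gives, for any real $\mathbf{a}=(a_p)$,
\[
\mathbf{a}^{\!\top}A_j\,\mathbf{a}=\delta_j\,(2\pi)^{-(d+1)/2}\int_{\R^{d+1}}\widehat{\Pi}(\delta_j\boldsymbol\zeta)\,\Big|\sum_{\bx_p\in X_j}a_p\,e^{i\bx_p^{\top}\boldsymbol\zeta}\Big|^{2}d\boldsymbol\zeta.
\]
I would restrict the integral to $\|\boldsymbol\zeta\|_2\le\beta/q_j$ with $\beta$ a fixed, sufficiently large constant. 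On that ball $\widehat{\Pi}(\delta_j\boldsymbol\zeta)\ge\inf_{\|\boldsymbol\xi\|_2\le\beta\nu c_q}\widehat{\Pi}(\boldsymbol\xi)=:c_\Pi>0$ (using $\delta_j/q_j\le\nu c_q$), while the Ball--Narcowich--Ward localization inequality, applied to the $q_j$-separated set $\{\bx_p\}\subset\mS^d\subset\R^{d+1}$ and with $\beta$ large enough, gives $\int_{\|\boldsymbol\zeta\|_2\le\beta/q_j}\big|\sum_pa_pe^{i\bx_p^{\top}\boldsymbol\zeta}\big|^2d\boldsymbol\zeta\ge c\,(\beta/q_j)^{d+1}\|\mathbf{a}\|_2^2$. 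Hence $\mathbf{a}^{\!\top}A_j\mathbf{a}\ge C\delta_jq_j^{-(d+1)}\|\mathbf{a}\|_2^2\ge C\nu^{d+1}\delta_j^{-d}\|\mathbf{a}\|_2^2$, i.e. $\lambda_{\min}(A_j)\ge C_2\delta_j^{-d}$ with $C_2$ depending only on $d$, $\Pi$, $\nu$, $c_q$. An alternative stays on the sphere: one expands $\mathbf{a}^{\!\top}A_j\mathbf{a}=\sum_{\ell,k}\widehat{\phi_{\delta_j}}(\ell)\big(\sum_pa_pY_{\ell,k}(\bx_p)\big)^2$, uses the lower bound in \eqref{cond:whatdel} to compare with the Gram matrix of a smoothly filtered polynomial kernel of degree $\sim q_j^{-1}$, and estimates that Gram matrix by Gershgorin, its off-diagonal entries decaying super-algebraically because the degree is comparable to $q_j^{-1}$.

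Combining the two bounds gives $\kappa(A_j)\le C_1/C_2=:C$ for every $j=1,\dots,n$, as claimed. The main obstacle is the lower bound $\lambda_{\min}(A_j)\gtrsim\delta_j^{-d}$: a crude Gershgorin estimate on $A_j$ itself (or on the Dirichlet-type truncated kernel) is too weak, and one needs the scale-uniform Ball--Narcowich--Ward (equivalently Marcinkiewicz--Zygmund) machinery. The point worth stressing is that this machinery uses the point set only through its separation radius $q_j$, and the new hypothesis $q_j\le h_j\le c_qq_j$ is precisely what keeps $q_j$ tied to the scale $\delta_j=\nu h_j$ employed at level $j$; consequently the global estimates of \cite{LeGSloWen10} transfer without change to the localized sets $X_{m+1},\dots,X_n\subset\Omega$.
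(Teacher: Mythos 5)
Your argument is correct, but note that the paper itself offers no proof of this theorem at all: it simply states that the result is ``adapted from [LeGSloWen10, Theorem 7.3]'', so there is nothing in the text to compare against line by line. What you have written is essentially the standard proof that underlies that citation: Gershgorin plus a packing argument for $\lambda_{\max}(A_j)\le C_1\delta_j^{-d}$ (using that $\Pi$ is compactly supported and that a cap of radius $\sim\delta_j$ contains at most $C(\delta_j/q_j)^d$ points of a $q_j$-separated set), and the Fourier-transform/Narcowich--Ward stability estimate for $\lambda_{\min}(A_j)\ge C_2\delta_j^{-d}$ (using $\widehat{\Pi}(\boldsymbol{\omega})\asymp(1+\|\boldsymbol{\omega}\|_2^2)^{-\tau}>0$ and the localization of exponential sums over a ball of radius $\beta/q_j$). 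You have also correctly isolated the one point that makes the constants level-independent, namely that the quasi-uniformity hypothesis $q_j\le h_j\le c_q q_j$ together with $\delta_j=\nu h_j$ keeps the ratio $\delta_j/q_j$ in the fixed interval $[\nu,\nu c_q]$, so both eigenvalue bounds scale as $\delta_j^{-d}$ and the condition number is bounded uniformly in $j$, at the global and local levels alike. Minor points only: the separation radius in the paper is geodesic while the Narcowich--Ward machinery is Euclidean in $\R^{d+1}$, so you should say explicitly that the two metrics are comparable on $\mS^d$; and your displayed Fourier-inversion identity with the prefactor $\delta_j$ is correct after the substitution $\boldsymbol{\omega}=\delta_j\boldsymbol{\zeta}$, which is worth one more line of justification in a final write-up.
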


\section{Escaping the native space}\label{sec:escape}

In this section, our target function $f$ will be assumed to be in
$H^\beta(\mS^d)$ for some $\beta \in (d/2,\sigma)$. The extension of an
approximation result to spaces rougher than the native space is often
referred as ``escaping the native space''.

Let $K$ be the reproducing kernel of the Sobolev space
$H^{\beta+1/2}(\R^{d+1})$. We define the kernel $\Psi$ by restricting $K$
to the sphere,
\[
  \Psi(\bx,\by) = K(\bx-\by),\quad \bx,\by \in \mS^d.
\]
For $0<\delta \le 1$, the scaled version of $\Psi$ is defined by
\[
  \Psi_\delta(\bx,\by) =
  \delta^{-d} K((\bx-\by)/\delta),\quad \bx,\by \in \mS^d.
\]
It can be expanded into a series of spherical harmonics as
\begin{equation}\label{scaledPsi}
\Psi_\delta(\bx,\by) = \sum_{\ell=0}^\infty \sum_{k=0}^{N(d,\ell)}
 \widehat{\psi_\delta}(\ell) Y_{\ell, k}(\bx) Y_{\ell,k}(\by).
\end{equation}
It is known \cite[Lemma 2.1]{LeGSloWen12} that there
are positive constants $c_3,c_4$ independent of $\delta$ and $\ell$
so that
\begin{equation}
c^2_3(1+\delta \ell)^{-2\beta} \le \widehat{\psi_\delta}(\ell)
\le c^2_4(1+\delta \ell)^{-2\beta}, \quad \ell \ge 0.
\end{equation}

We can define the RKHS with the reproducing kernel
$\Psi_\delta$ and its norm $\|\cdot\|_{\Psi_\delta}$ as
in \eqref{def:native} and \eqref{def:scaledPhinorm}.
By Lemma~\ref{norm}, the norm $\| \cdot\|_{\Psi_\delta}$ defined on 
$\cN_{\Psi_\delta}$ is equivalent to $\|\cdot\|_{H^\beta(\mS^d)}$.

For the multiscale convergence theory, the sole thing that prevents us
from using the proof of Theorem~\ref{thm:convergence1} with $\Phi_\delta$
replaced by $\Psi_\delta$ is that a key stability property is missing: the
orthogonal projection property \eqref{orthoprop} no longer holds. We
therefore approximate a function in $\Psi_\delta$  by a polynomial (which
of course lies in all Sobolev spaces), and apply the orthogonal projection
property to that polynomial.

For a given smooth function $f$, the following lemma \cite[Lemma
4.3]{LeGSloWen12} asserts the existence of a spherical polynomial that
interpolates $f$ on a set of scattered points $X$ and, simultaneously, has
a $\Psi_ \delta$ norm comparable to that of $f$.
\begin{lemma}\label{lem:near best}
Let $f \in H^\beta(\mS^d)$, $\beta>d/2$ and let $X$ be a finite subset of $\mS^d$ with separation radius
$q_X$. Let $\delta \in (0,1]$ be given. There exists a constant $\kappa$, which depends only on $d$ and
$\beta$, such that if $L \ge \kappa \max\{\delta/q_X,1/\delta\}$, then there is a spherical polynomial
$p\in \cP_L$ such that $p_X = f|_X$ and
\[
  \|f-p\|_{\Psi_\delta} \le 5 \|f\|_{\Psi_\delta}.
\]
\end{lemma}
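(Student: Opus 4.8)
The goal is to produce a polynomial $p\in\cP_L$ that matches $f$ on $X$ while keeping $\|f-p\|_{\Psi_\delta}$ within a constant factor of $\|f\|_{\Psi_\delta}$. The natural strategy is a two-step construction: first approximate $f$ by \emph{some} polynomial $\tilde p\in\cP_{L'}$ that is close to $f$ in a suitable scaled norm (say $\|f-\tilde p\|_{\Psi_\delta}\le\|f\|_{\Psi_\delta}$), and then correct $\tilde p$ by adding a second polynomial $r$ that repairs the interpolation conditions at $X$, i.e. $r|_X = (f-\tilde p)|_X$, while controlling $\|r\|_{\Psi_\delta}$. Setting $p=\tilde p+r$ gives $p|_X=f|_X$ and, by the triangle inequality, $\|f-p\|_{\Psi_\delta}\le\|f-\tilde p\|_{\Psi_\delta}+\|r\|_{\Psi_\delta}$; so the whole game is to make both summands $O(\|f\|_{\Psi_\delta})$, with the constants combining to at most $5$.

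\textbf{Step 1 (polynomial approximation in the scaled norm).} I would invoke a Jackson-type estimate on the sphere: for $f\in H^\beta(\mS^d)$ there is a best (or near-best) $L_2$-approximant $\tilde p\in\cP_{L'}$ with $\|f-\tilde p\|_{H^\nu(\mS^d)}\lesssim (L')^{-(\beta-\nu)}\|f\|_{H^\beta(\mS^d)}$ for $0\le\nu\le\beta$. Translating this into the $\Psi_\delta$ norm via Lemma~\ref{norm} (norm-equivalence, applied with $\sigma$ replaced by $\beta$) and the two-sided bounds on $\widehat{\psi_\delta}(\ell)$, the requirement $\|f-\tilde p\|_{\Psi_\delta}\le\|f\|_{\Psi_\delta}$ will hold provided $L'\gtrsim 1/\delta$; the factor $1/\delta$ is exactly where the $1/\delta$ term in the hypothesis $L\ge\kappa\max\{\delta/q_X,1/\delta\}$ comes from. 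One must be slightly careful that the equivalence constant relating $\|\cdot\|_{\Psi_\delta}$ to a convenient spectral norm is $\delta$-independent on the low-frequency block $\ell\lesssim 1/\delta$ and picks up the right $\delta$-power on the high-frequency block, so that the Jackson rate beats it.

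\textbf{Step 2 (the interpolatory correction).} Here I want a polynomial $r\in\cP_{L''}$ with $r|_X=g|_X$ where $g:=f-\tilde p$, and $\|r\|_{\Psi_\delta}\lesssim\|g\|_{\Psi_\delta}$. The tool is a polynomial reproduction / Marcinkiewicz--Zygmund inequality argument: if $L''$ is large enough relative to the separation radius $q_X$ — specifically $L''\gtrsim \delta/q_X$ after scaling, giving the other term in $\max\{\delta/q_X,1/\delta\}$ — then point evaluation on $X$ is stable on $\cP_{L''}$, and one can build an interpolation operator on $\cP_{L''}$ whose $\Psi_\delta$-norm is bounded independently of $L''$ and $\delta$. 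Applying this operator to (an extension/polynomial surrogate of) $g$ and using that its values on $X$ are small because $g$ is small in a norm controlling $\ell_\infty$ on $X$ (via a sampling inequality) yields $\|r\|_{\Psi_\delta}\lesssim\|g\|_{\Psi_\delta}\le\|f\|_{\Psi_\delta}$. Taking $L=\max\{L',L''\}$ and tracking constants produces the bound with $5$ on the right-hand side (the precise small constant being a bookkeeping matter rather than an essential difficulty).

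\textbf{Main obstacle.} The crux is Step 2: constructing an interpolation-correcting polynomial operator on $\cP_L$ that is uniformly bounded in the \emph{scaled} native norm $\|\cdot\|_{\Psi_\delta}$, simultaneously over all $\delta\in(0,1]$ and all point sets $X$ with the given separation. This requires a scaled Marcinkiewicz--Zygmund inequality (or a scaled norming-set argument) with $\delta$-independent constants, and the interplay between the degree threshold, $q_X$, and $\delta$ is exactly what forces the condition $L\ge\kappa\max\{\delta/q_X,1/\delta\}$. I expect this is where the cited argument from \cite{LeGSloWen12} does the real work; the rest is Jackson's theorem plus the norm-equivalence Lemma~\ref{norm} and routine constant-chasing.
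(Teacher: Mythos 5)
First, note that the paper does not actually prove this statement: it is imported verbatim from \cite[Lemma 4.3]{LeGSloWen12}, so the ``proof'' in the present paper is a citation, and your plan has to be measured against the argument in that reference. Your two-step architecture --- a near-best polynomial approximant $\tilde p$ plus an interpolatory polynomial correction $r$ with $r|_X=(f-\tilde p)|_X$ and controlled norm --- is indeed the right skeleton and matches the strategy used there. But as written your proposal is a plan with the decisive step missing, and you say so yourself: the existence of $r\in\cP_{L''}$ with $\|r\|_{\Psi_\delta}\le C\|f-\tilde p\|_{\Psi_\delta}$ \emph{uniformly in} $\delta\in(0,1]$ is exactly the content of the lemma, and ``I expect this is where the cited argument does the real work'' is not a proof of it. The difficulty you have not engaged with is that $r$ necessarily contains frequencies up to $L''$, where the weight $(1+\delta\ell)^{2\beta}$ in $\|\cdot\|_{\Psi_\delta}$ is large, while the only handle on $r$ coming from a norming-set or localized-kernel interpolation argument is a bound on $\|r\|_{L_\infty}$ or $\|r\|_{L_2}$ in terms of the values of $g=f-\tilde p$ on $X$; converting that into a $\Psi_\delta$-bound costs a factor like $(1+\delta L'')^{\beta}$ times a Sobolev-embedding factor $\delta^{-d/2}$, and showing that these losses are absorbed by the smallness of $g$ is the whole point.

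Two more concrete inaccuracies. First, your accounting of where the degree thresholds come from is off: if $\tilde p$ is taken to be the $L_2$-orthogonal truncation of the Fourier series at degree $L'$, then $\|f-\tilde p\|_{\Psi_\delta}\le\|f\|_{\Psi_\delta}$ holds for \emph{every} $L'$ with constant $1$ and no Jackson estimate --- so Step~1 imposes no constraint at all. The requirement $L\gtrsim 1/\delta$ is needed in Step~2, to make $g$ small in the weaker norms (gaining a factor $(\delta L')^{-\beta}$) that control the size of the correction. Second, your claim that degree $L''\gtrsim\delta/q_X$ suffices ``after scaling'' for the interpolatory correction is asserted rather than derived, and it is exactly the delicate point: a dimension count shows $\dim\cP_{L''}\sim(\delta/q_X)^d$ can be far smaller than $|X|\sim q_X^{-d}$ when $\delta$ is small, so no bare Marcinkiewicz--Zygmund argument at that degree can match arbitrary data on $X$; this is precisely why Theorem~\ref{thm:convergence2} additionally imposes $h_j\le c_q\sqrt{q_j}$, which forces $\delta_j/q_j\lesssim 1/\delta_j$ and keeps the two terms of the maximum comparable. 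Finally, the constant $5$ is not ``bookkeeping'': it arises from a specific chain of triangle inequalities with explicit constants, and any proof has to exhibit that chain.
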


\noindent {\bf Remark} The dependence of the lower bound for $L$ on the
mesh radius $q_X$ in the last lemma makes it necessary to impose a weak
condition on $q_X$ in the following theorem.

\begin{theorem}[Convergence outside the native space]\label{thm:convergence2}
  Let $X_1,\ldots,X_m$ be a sequence of point sets in $\mS^d$
  and let $X_{m+1},\ldots,X_n$ be a sequence of point sets in
  $\Omega \subset \mS^d$ where $\Omega$ satisfies the requirements
  in Theorem~\ref{extension}. Assume that we are performing $m$ steps
  of the global multilevel algorithm on $\mS^{d}$ and then $n-m$
  steps of the local multilevel algorithm, localised to $\Omega$.

  Let the (global or local) mesh norms $h_1,\ldots,h_n$ and
  the separation radii $q_1,\ldots,q_n$ satisfy

  \begin{itemize}
  \item[(i)] $h_{j+1} = \mu h_j$ for $j=1,\ldots,n$ with $\mu \in (0,1)$,
  \item[(ii)] $q_j \le h_j \le c_q \sqrt{q_j}$ for $j=1,2,\ldots,n$.
  \end{itemize}

  Let $\Phi$ be a kernel generating $H^\sigma(\mS^d)$ and  let
  $\Phi_j := \Phi_{\delta_j}$ be defined by \eqref{scaledPhi}
  with scale factor $\delta_j = \nu h_j$ where
  $1/h_1 \ge \nu \ge \gamma/\mu \ge 1$ with a fixed $\gamma>0$.
  Let $\Psi$ be a kernel generating $H^\beta(\mS^d)$ with
  $\sigma>\beta>d/2$ and let $\Psi_j:=\Psi_{\delta_j}$ be the scaled version \eqref{scaledPsi}using
  the scale factor $\delta_j$. Assume that the target function
  $f$ belongs to $H^\beta(\mS^d)$.

  Then, Algorithm 1 converges in the $L_2(\Omega)$ sense linearly
  in the number of levels. To be more
  precise, there is a constant $C>0$ and a constant $\alpha>0$, which
  for $\mu$ sufficiently small is $<1$, such that
\[
\|f-f_n\|_{L_2(\Omega)} \le C \alpha^n \|f\|_{H^\beta(\mS^d)}
\]
for all $f\in H^\beta(\mS^{d})$.
\end{theorem}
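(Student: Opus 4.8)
The strategy is to mirror the proof of Theorem~\ref{thm:convergence1} as closely as possible, with the crucial modification forced by the fact that $f$ now lies only in $H^\beta(\mS^d)$, so that the orthogonal projection inequality \eqref{orthoprop} --- valid in the $\Phi_\delta$-geometry --- cannot be applied directly to the residuals. The remedy, as signalled in the text preceding Lemma~\ref{lem:near best}, is to insert at each level a spherical polynomial $p_j$ that interpolates $e_{j-1}$ on $X_j$ and satisfies $\|e_{j-1}-p_j\|_{\Psi_{\delta_j}} \le 5\|e_{j-1}\|_{\Psi_{\delta_j}}$; since $p_j$ is smooth it lies in the native space $H^\sigma(\mS^d)$ (or $H^\sigma(\Omega)$ after extension), so one can then apply the $\Phi_{\delta_j}$-orthogonal projection estimate to $p_j$ rather than to $e_{j-1}$ itself. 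Note the hypothesis (ii), $q_j \le h_j \le c_q\sqrt{q_j}$, is exactly what is needed so that the polynomial degree bound $L \gtrsim \max\{\delta_j/q_j, 1/\delta_j\}$ from Lemma~\ref{lem:near best} can be met with a degree that is controlled across levels.

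The concrete steps: First, reproduce the opening chain \eqref{ferror}, which only uses the zeros theorem (Lemma~\ref{zeros}), the extension theorem, Lemma~\ref{norm}, and $h_n/\delta_{n+1}\le 1/\gamma$; none of this uses that $f$ is in the native space, so it carries over verbatim, yielding $\|f-f_n\|_{L_2(\Omega)} \le C\|Ee_n\|_{\Phi_{n+1}}$. Wait --- here one must be slightly careful: $e_n$ need not be in $H^\sigma$, only in $H^\beta$, so the estimate $\|Ee_n\|_{H^\sigma(\mS^d)}\le C\delta_{n+1}^{-\sigma}\|Ee_n\|_{\Phi_{n+1}}$ should be replaced by the corresponding $\Psi_{\delta_{n+1}}$-version $\|Ee_n\|_{H^\beta(\mS^d)}\le C\delta_{n+1}^{-\beta}\|Ee_n\|_{\Psi_{n+1}}$, so the end product of this stage is $\|f-f_n\|_{L_2(\Omega)}\le C\|Ee_n\|_{\Psi_{n+1}}$ after using $h_n^\beta\delta_{n+1}^{-\beta}=(\mu\nu)^{-\beta}\le\gamma^{-\beta}$. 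Second, establish the three recursions in the $\Psi$-norm:
\[
\|e_j\|_{\Psi_{j+1}}\le\alpha\|e_{j-1}\|_{\Psi_j}\ (j\le m),\quad
\|Ee_{m+1}\|_{\Psi_{m+2}}\le\alpha\|e_m\|_{\Psi_{m+1}},\quad
\|Ee_j\|_{\Psi_{j+1}}\le\alpha\|Ee_{j-1}\|_{\Psi_j}\ (j\ge m+2).
\]
Third, compose them in reverse order to get $\|Ee_n\|_{\Psi_{n+1}}\le\alpha^n\|e_0\|_{\Psi_1}=\alpha^n\|f\|_{\Psi_1}\le C\alpha^n\|f\|_{H^\beta(\mS^d)}$, and combine with the first step.

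The heart of the matter is the recursion step, say \eqref{rec2}'s analogue for $j\ge m+2$. Here I would split $\|Ee_j\|^2_{\Psi_{j+1}}$ into a low-frequency part $I_1$ (over $\ell\le 1/\delta_{j+1}$) and a high-frequency part $I_2$, exactly as in the proof of Theorem~\ref{thm:convergence1}. For $I_1$ one bounds $(1+\delta_{j+1}\ell)^{2\beta}\le 2^{2\beta}$ to get $I_1\le C\|Ee_j\|^2_{L_2(\mS^d)}\le C\|e_j\|^2_{L_2(\Omega)}\le Ch_j^{2\beta}\|e_j\|^2_{H^\beta(\Omega)}$ by the zeros theorem; the new ingredient is bounding $\|e_j\|_{H^\beta(\Omega)}$ --- and it is here one uses the near-interpolation polynomial: write $e_j=e_{j-1}-I_{X_j,\delta_j}e_{j-1}$, compare with $p_j-I_{X_j,\delta_j}p_j$ using $e_{j-1}-p_j$ vanishing nowhere but having controlled $\Psi_{\delta_j}$-norm, apply the $\Phi_{\delta_j}$-orthogonal projection estimate to $Ep_j$, and convert back via Lemma~\ref{norm} and the norm-equivalences to land on $\|Ee_{j-1}\|_{\Psi_j}$; the $\delta$-powers then assemble to a factor $(h_j/\delta_j)^{?}=\nu^{-?}\le\mu^{?}$ (with exponent $\beta$ after the dust settles, since the smoothness that controls the residual is $\beta$, not $\sigma$). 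The high-frequency term $I_2$ is handled with $(1+\delta_{j+1}\ell)^{2\beta}\le (2\delta_{j+1}\ell)^{2\beta}$ for $\ell>1/\delta_{j+1}$, giving $I_2\le C\delta_{j+1}^{2\beta}\|Ee_j\|^2_{H^\beta(\mS^d)}\le C\delta_{j+1}^{2\beta}\|e_j\|^2_{H^\beta(\Omega)}$, then the same polynomial detour produces $(\delta_{j+1}/\delta_j)^{2\beta}=\mu^{2\beta}$. Adding, $\|Ee_j\|_{\Psi_{j+1}}\le C\mu^\beta\|Ee_{j-1}\|_{\Psi_j}$, and $\mu$ small makes $\alpha=C\mu^\beta<1$.

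\textbf{Expected main obstacle.} The delicate bookkeeping is in the polynomial-substitution step: one must verify that the degree $L_j$ demanded by Lemma~\ref{lem:near best} at level $j$, namely $L_j\gtrsim\max\{\delta_j/q_j,1/\delta_j\}$, is compatible with the geometry --- in particular that $\delta_j/q_j$ stays bounded (using $q_j\le h_j$, so $\delta_j/q_j\ge\nu$, but one needs an \emph{upper} bound, which is where $h_j\le c_q\sqrt{q_j}$, i.e. $q_j\ge h_j^2/c_q^2$, enters: then $\delta_j/q_j\le \nu h_j c_q^2/h_j^2 = \nu c_q^2/h_j$, which is large but the point is only that $p_j$ exists, not that $L_j$ be uniformly bounded). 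A second subtlety: when applying the $\Phi_{\delta_j}$-orthogonal projection estimate to $p_j$, one needs $p_j$ (or its extension $Ep_j$) handled consistently on $\Omega$ versus $\mS^d$ --- since $p_j$ is globally smooth but only interpolates $e_{j-1}$ on $X_j\subset\Omega$, the correct object to feed into the projection machinery is a polynomial interpolating the \emph{extended} residual $Ee_{j-1}$ on $X_j$, or equivalently one works with $Ee_{j-1}$ throughout from level $m+2$ on; the bridging level $j=m+1$ again avoids the extension operator exactly as in Theorem~\ref{thm:convergence1}. Apart from these, every estimate is a $\beta$-flavoured transcription of the native-space proof, and the exponent $\sigma$ governing the \emph{kernel} $\Phi_j$ cancels out of the residual recursion, leaving only $\beta$; this is the reason the hypothesis $\sigma>\beta>d/2$ suffices.
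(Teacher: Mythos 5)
Your plan coincides with the paper's own: the theorem itself is proved exactly as Theorem~\ref{thm:convergence1} with $\sigma$ replaced by $\beta$ and $\Phi_j$ by $\Psi_j$, and all the genuinely new work is concentrated in the $\beta$-analogue of Lemma~\ref{key native} (Lemma~\ref{key escape} in the paper), which is established by precisely the polynomial detour you describe: insert the near-interpolant $p\in\cP_{L_j}$ of $Ee_{j-1}$ supplied by Lemma~\ref{lem:near best}, bound $\|Ee_{j-1}-p\|_{H^\beta(\mS^d)}$ via Lemma~\ref{norm}, and bound $\|p-I_{X_j,\delta_j}p\|_{H^\beta(\mS^d)}$ via the zeros theorem plus \eqref{orthoprop} applied to $p$. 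Your correction of the opening chain (replacing $\Phi_{n+1}$ by $\Psi_{n+1}$ and the exponent $\sigma$ by $\beta$), the three $\Psi$-norm recursions, and the $I_1/I_2$ frequency split are all exactly the paper's argument.

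The one place your reasoning goes astray is the role of hypothesis (ii). You assert that the upper bound on $\delta_j/q_j$ matters ``only so that $p_j$ exists, not that $L_j$ be uniformly bounded.'' That is not the point: $p_j$ exists for any sufficiently large degree, with no hypothesis on $q_j$ whatsoever. What the conversion $\|p\|_{\Phi_j}\le C\|p\|_{\Psi_j}$ (your ``convert back via the norm-equivalences'') actually requires is a Bernstein-type inverse estimate whose constant is $(1+\delta_jL_j)^{2(\sigma-\beta)}$; since $\sigma>\beta$ this blows up unless $\delta_jL_j$ is bounded uniformly in $j$. Because $L_j\gtrsim\max\{\delta_j/q_j,1/\delta_j\}$, one needs $\delta_j/q_j\lesssim 1/\delta_j$, i.e.\ $q_j\gtrsim h_j^2$, and this is exactly what $h_j\le c_q\sqrt{q_j}$ delivers; your own computation $\delta_j/q_j\le\nu c_q^2/h_j=\nu^2c_q^2/\delta_j$ already proves it, but you drew the wrong conclusion from it. The norms $\|\cdot\|_{\Phi_j}$ and $\|\cdot\|_{\Psi_j}$ are emphatically \emph{not} equivalent uniformly in $j$ --- one is a $\sigma$-norm, the other a $\beta$-norm --- so the conversion is legitimate only on $\cP_{L_j}$ with $\delta_jL_j$ controlled. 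Without this observation the recursion constant $\alpha$ is not level-independent and the telescoped bound $\alpha^n\|f\|_{H^\beta(\mS^d)}$ fails; with it, your argument is the paper's.
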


Similarly to the case of Theorem~\ref{thm:convergence1}, the proof of the
theorem rests upon the following technical lemma.  But in this case the
proof is necessarily different, because the orthogonal projection property
\eqref{orthoprop} is not available.

\begin{lemma}
\label{key escape}
Let $e_j$ for $j=0,\ldots,n$ be as in Algorithm 1,
and let $E$ be the extension operator from $\Omega$ to
$\mS^d$ as defined in Theorem~\ref{extension}. 
Let the assumptions on $h_j$ and $q_j$ be satisfied
as in Theorem~\ref{thm:convergence2}.
Then
\begin{enumerate}
\item[(i)] $\|e_j\|_{H^\beta(\mS^d)} \le C\delta^{-\beta}_j \|e_{j-1}\|_{\Psi_j}$ for $j=1,\ldots,m,$
\item[(ii)] $\|e_{m+1}\|_{H^\beta(\Omega)}
\le C\delta^{-\beta}_{m+1} \|e_{m}\|_{\Psi_{m+1}}$,
\item[(iii)] $\|e_j\|_{H^\beta(\Omega)} \le C\delta^{-\beta}_j \|Ee_{j-1}\|_{\Psi_j}$ for $j=m+2,\ldots,n.$
\end{enumerate}
\end{lemma}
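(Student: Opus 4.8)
The plan is to mimic the proof of Lemma~\ref{key native}, the one essential new difficulty being that the orthogonal projection bound \eqref{orthoprop} is no longer available, since the residual $e_{j-1}$ lies only in $H^\beta(\mS^d)$ rather than in the native space $\cN_{\Phi_j}=H^\sigma(\mS^d)$ of the approximation space $W_j$. Following the device of \cite{LeGSloWen12}, I would first note that every correction $s_i$ lies in $W_i\subset H^\sigma(\mS^d)$, so inductively $e_j=f-\sum_{i\le j}s_i\in H^\beta(\mS^d)$ and $e_{j-1}|_\Omega\in H^\beta(\Omega)$; hence Lemma~\ref{lem:near best} may legitimately be applied below. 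In each of the three cases one applies Lemma~\ref{lem:near best} to the relevant function $g$ --- namely $g=e_{j-1}$ in (i), $g=e_m$ in (ii), and $g=Ee_{j-1}$ in (iii) --- with point set $X_j$, scale $\delta_j$, and polynomial degree $L_j$ chosen as small as the lemma permits, that is, of order $\max\{\delta_j/q_j,\,1/\delta_j\}$; this yields $p\in\cP_{L_j}$ with $p|_{X_j}=g|_{X_j}$ and $\|g-p\|_{\Psi_j}\le 5\|g\|_{\Psi_j}$. Since $p$ and $g$ agree on $X_j$ they have the same RBF interpolant, $I_{X_j,\delta_j}g=I_{X_j,\delta_j}p$, so
\[
 g-I_{X_j,\delta_j}g=(g-p)+(p-I_{X_j,\delta_j}p),
\]
and the task reduces to bounding the two summands in the appropriate $H^\beta$ norm.

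For the first summand I would use the norm equivalence of Lemma~\ref{norm} (with $\sigma,\Phi$ replaced by $\beta,\Psi$) and then Lemma~\ref{lem:near best} to get $\|g-p\|_{H^\beta}\le C\delta_j^{-\beta}\|g-p\|_{\Psi_j}\le C\delta_j^{-\beta}\|g\|_{\Psi_j}$. For the second summand, $p-I_{X_j,\delta_j}p$ vanishes on $X_j$, so the zeros theorem (Lemma~\ref{zeros}) bounds its $H^\beta$ norm by $Ch_j^{\sigma-\beta}$ times its $H^\sigma$ norm; and since $p$ is a polynomial --- hence an element of $\cN_{\Phi_j}$ --- the orthogonal projection bound \eqref{orthoprop} does apply to $p$, giving $\|p-I_{X_j,\delta_j}p\|_{H^\sigma}\le c_2\delta_j^{-\sigma}\|p\|_{\Phi_j}$. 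The crucial step is then to lift the $\Phi_j$ (that is, $H^\sigma$-weighted) norm of $p$ back to its $\Psi_j$ ($H^\beta$-weighted) norm: writing $(1+\delta_j\ell)^{2\sigma}=(1+\delta_j\ell)^{2(\sigma-\beta)}(1+\delta_j\ell)^{2\beta}$, bounding the first factor by $(1+\delta_jL_j)^{2(\sigma-\beta)}$ since $\widehat{p}_{\ell,k}=0$ for $\ell>L_j$, and using \eqref{cond:whatdel} together with the analogous two-sided bound on $\widehat{\psi_{\delta_j}}(\ell)$, one obtains $\|p\|_{\Phi_j}\le C(1+\delta_jL_j)^{\sigma-\beta}\|p\|_{\Psi_j}\le C(1+\delta_jL_j)^{\sigma-\beta}\|g\|_{\Psi_j}$, the last step by the triangle inequality and Lemma~\ref{lem:near best}. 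Adding the two summands and using $\delta_j=\nu h_j$ with $\nu\ge 1$, so that $h_j^{\sigma-\beta}\delta_j^{-\sigma}=\nu^{\beta-\sigma}\delta_j^{-\beta}\le\delta_j^{-\beta}$, gives $\|g-I_{X_j,\delta_j}g\|_{H^\beta}\le C(1+\delta_jL_j)^{\sigma-\beta}\,\delta_j^{-\beta}\,\|g\|_{\Psi_j}$.

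The three cases then require only bookkeeping. In (i) everything is global, $g=e_{j-1}$, and $e_j=g-I_{X_j,\delta_j}g$ is exactly the function just estimated. In (iii), $X_j\subset\Omega$ and $Ee_{j-1}$ coincides with $e_{j-1}$ on $\Omega$, so $I_{X_j,\delta_j}e_{j-1}=I_{X_j,\delta_j}(Ee_{j-1})$ and $e_j|_\Omega=(Ee_{j-1}-I_{X_j,\delta_j}(Ee_{j-1}))|_\Omega$; applying the above with $g=Ee_{j-1}$, noting that restriction to the subdomain $\Omega$ only decreases the $H^\beta$ and $H^\sigma$ norms, and using Lemma~\ref{zeros} on $\Omega$ instead of on $\mS^d$, gives (iii). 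Case (ii) is handled just like (iii), except that $g=e_m$ is already globally defined so no extension is needed; the output is still measured in $H^\beta(\Omega)$ and Lemma~\ref{zeros} is applied on $\Omega$.

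The one new point, and the part I expect to need the most care, is keeping $(1+\delta_jL_j)^{\sigma-\beta}$ bounded by a constant independent of $j$ --- which is precisely why Theorem~\ref{thm:convergence2} imposes hypothesis (ii) on the separation radii. With the minimal degree $L_j$ of order $\max\{\delta_j/q_j,\,1/\delta_j\}$, one has $\delta_jL_j$ of order $\max\{\delta_j^2/q_j,\,1\}$; since $\delta_j=\nu h_j$ and (ii) gives $q_j\ge h_j^2/c_q^2$, this is at most a constant multiple of $\max\{\nu^2c_q^2,1\}$, hence bounded uniformly in $j$. That absorption, together with the careful tracking of powers of $h_j$, $\delta_j$, $\nu$ and $L_j$ through the chain of inequalities, is the only real subtlety; everything else is a rearrangement of the proof of Lemma~\ref{key native}.
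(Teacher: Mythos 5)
Your proof follows essentially the same route as the paper's: the same splitting $g - I_{X_j,\delta_j}g = (g-p) + (p - I_{X_j,\delta_j}p)$ via the near-best interpolating polynomial of Lemma~\ref{lem:near best}, the same use of the zeros lemma and \eqref{orthoprop} applied to the polynomial $p$, the same Bernstein-type bound $\|p\|_{\Phi_j}\le C(1+\delta_j L_j)^{\sigma-\beta}\|p\|_{\Psi_j}$, and the same observation that hypothesis (ii) on the separation radii keeps $\delta_j L_j$ uniformly bounded. The only (harmless, and arguably slightly more careful) deviation is that in the local cases you apply Lemma~\ref{zeros} on $\Omega$ with the local mesh norm before invoking \eqref{orthoprop} globally, whereas the paper applies the zeros lemma directly on $\mS^d$.
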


\begin{proof}

\noindent

We prove part iii) since part i) follows easily by replacing $\Omega$ by
$\mS^d$ and omitting the extension operator, and part (ii) is in an
obvious sense intermediate. See also the proof of \cite[Lemma
4.4]{LeGSloWen12}.

We use the extension operator to extend $e_{j-1}$ to $E e_{j-1}$ defined
on the whole sphere, for $j=m+2,\ldots,n$. Then, with $L_j := \lceil \kappa
\max\{ \delta_j/q_j,1/\delta_j\} \rceil$, by Lemma~\ref{lem:near best},
there is a polynomial $p\in \cP_{L_j}$ that interpolates and approximates $E
e_{j-1}$, in the sense that
\begin{equation}\label{p of f}
p|_{X_j}=Ee_{j-1}|_{X_j} \text{ and }
\|p-Ee_{j-1}\|_{\Psi_j} \le 5\|Ee_{j-1}\|_{\Psi_j}.
\end{equation}
We note that the RBF interpolant for $e_{j-1}$ coincides with the RBF
interpolant for $E e_{j-1}$ on $X_j$. Therefore,
\begin{eqnarray}
\|e_j\|_{H^\beta(\Omega)} &=&
\|e_{j-1} - I_{X_j,\delta_j} e_{j-1} \|_{H^\beta(\Omega)} \nonumber\\
&=& \|Ee_{j-1} - I_{X_j,\delta_j} Ee_{j-1} \|_{H^\beta(\Omega)} \nonumber\\
&\le & C\|Ee_{j-1} - I_{X_j,\delta_j} Ee_{j-1} \|_{H^\beta(\mS^d)} \nonumber\\
&\le& C\left(\|Ee_{j-1} - p\|_{H^\beta(\mS^d)} +
   \|p  - I_{X_j,\delta_j} Ee_{j-1} \|_{H^\beta(\mS^d)}\right). \label{ineq1}
\end{eqnarray}
The first term of \eqref{ineq1} can be bounded using Lemmas~\ref{norm}
and \ref{lem:near best},
\begin{equation}\label{1st est}
\| E e_{j-1} - p\|_{H^\beta(\mS^d)}
\le c_4 \delta_j^{-\beta} \|E e_{j-1} - p\|_{\Psi_j}
\le 5 c_4 \delta_j^{-\beta} \|E e_{j-1}\|_{\Psi_j}.
\end{equation}
For the second term, since $p|_{X_j} = Ee_{j-1}|_{X_j}$ the interpolant
$I_{X_j,\delta_j} E e_{j-1}$ is identical to $I_{X_j,\delta_j} p$, hence
by using Lemma~\ref{zeros} 
and \eqref{orthoprop}, we
have
\begin{align*}
\|p-I_{X_j,\delta_j} E e_{j-1}\|_{H^\beta(\mS^d)}
&=\|p-I_{X_j,\delta_j} p\|_{H^\beta(\mS^d)}
\le C h_j^{\sigma-\beta}\|p - I_{X_j,\delta} p\|_{H^\sigma(\mS^d)}\\
&\le C h_j^{\sigma-\beta}\delta_j^{-\sigma} \|p\|_{\Phi_j}
\le C \delta_j^{-\beta}\|p\|_{\Phi_j}.
\end{align*}
For the polynomial $p$ of degree $L_j$, using the definition \eqref{def:scaledPhinorm},
condition~\eqref{cond:whatdel} and the fact that $\beta < \sigma$, we have
\begin{align*}
\|p\|^2_{\Phi_j} &\le
C \sum_{\ell=0}^{L_j} \sum_{k=1}^{N(d,\ell)}
 (1+\delta_j \ell)^{2\sigma} |\widehat{p}_{\ell k}|^2 \\
 &\le C(1+\delta_j L_j)^{2(\sigma-\beta)}
 \sum_{\ell=0}^{L_j} \sum_{k=1}^{N(d,\ell)}
 (1+\delta_j \ell)^{2\beta} |\widehat{p}_{\ell k}|^2 \\
 &\le C \|p\|^2_{\Psi_j},
\end{align*}
where in the last step we used $L_j\le C/\delta_j$. 
(Since $h_j\le c_q \sqrt{q_j}$ and since $\delta_j=\nu h_j$, we see that $\delta_j/q_j \le c/\delta_j$ and hence 
$L_j\le C/\delta_j$).Thus, combining these
above estimates together with the fact that $\|p\|_{\Psi_j} \le 6\|E
e_{j-1}\|_{\Psi_j}$ we obtain
\begin{equation}\label{2nd est}
\|p-I_{X_j,\delta_j} E e_{j-1}\|_{H^\beta(\mS^d)}
\le C\delta_j^{-\beta} \|E e_{j-1}\|_{\Psi_j}.
\end{equation}
Combining \eqref{ineq1}, \eqref{1st est} and \eqref{2nd est}, we obtain
the desired result.
\end{proof}

\begin{proofof}{Theorem~\ref{thm:convergence2}}
The proof is identical to that for Theorem~\ref{thm:convergence1} once we
have established Lemma~\ref{key escape}: the only difference is that
$\sigma$ is replaced by $\beta$ and $\Phi_j$ by $\Psi_j$. We leave the
details to the reader.
\end{proofof}

\section{Numerical experiment}\label{sec:numerics}
In this section, we describe a numerical experiment that illustrates
the multiscale algorithm described in previous sections.

Let $\bp = (1/\sqrt{3},\; 1/\sqrt{3},\; 1/\sqrt{3})^T$ and $\bq =
(-0.7476,\;   0.5069, \;   0.4289)^T$ be two given points on $\mS^2$, and
let $\alpha :=\pi/12$ and $\rho := \pi/96$. Let $\Omega_1=G(\bq,\alpha)$
and $\Omega_2 = G(\bq,\rho)$ be concentric spherical caps centered at
$\bq$, with geodesic radii $\alpha$ and $\rho$ respectively. Note that the
successive areas of $\mS^2$, $\Omega_1$ and $\Omega_2$ are decreasing by a
factor of roughly $60$.

A point on $\mS^2$ is parametrized by polar coordinates $\theta, \phi$,
with
\[
\bx =(\sin\theta \cos \phi, \sin\theta \sin\phi, \cos\theta) \text{ for }
\theta \in [0,\pi] \text{ and } \phi \in [0,2\pi).
\]
Let $t = \cos^{-1}(\bp \cdot \bx)$ and let $s=\cos^{-1}(\bq \cdot \bx)$.

\begin{figure}[h]
\begin{center}
\includegraphics[scale=0.4]{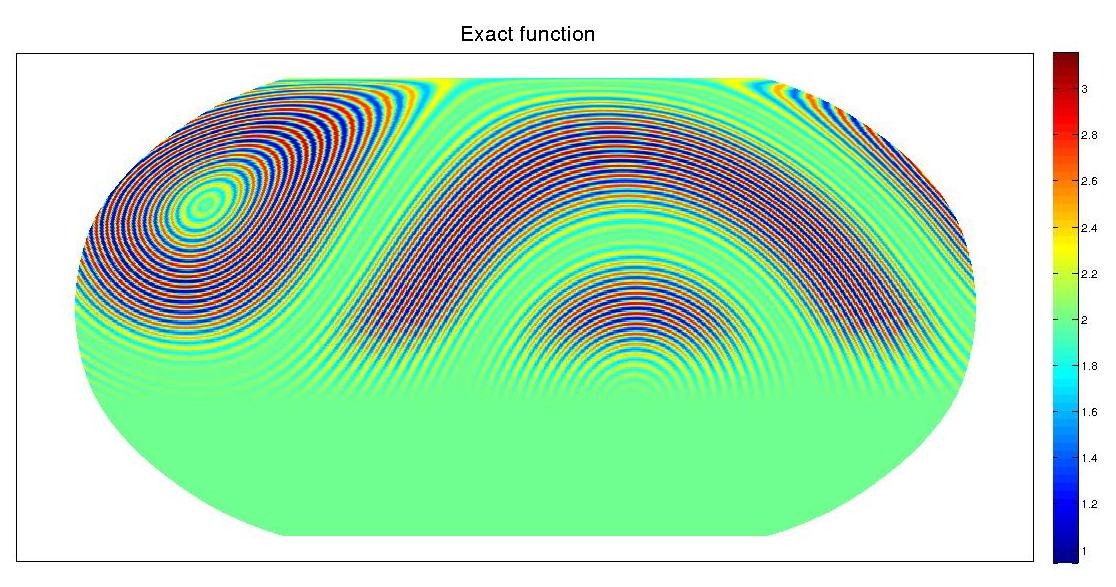}
\end{center}
\caption{Exact function from a global view}\label{fig:exactf}
\end{figure}

The target function $f$ is given by
\[
f(\bx) = 2 + \left[\sin t \cos (100t) + (1-3s/2\rho)^2_{+} \cos(2000\theta)\right]S(\theta),
\]
where $S(\theta)$ is a cubic spline which takes the values of $1$ for
$\theta \in [0,\pi/2]$ and $0$ for $\theta \in [2\pi/3,\pi]$. The function
$f$, shown in Figure~\ref{fig:exactf},  is designed to have both global
features and finer features.  On the global scale, the effect of the
spline multiplying the second term is that $f$ has the constant value $2$
below a latitude of $30^{o}$ south.  This feature was chosen because we
want to be sure that the approximation scheme approximates a constant
satisfactorily. (We remind the reader that approximating a constant with
compactly supported radial basis functions is non-trivial, specially if
the scale is comparable to the mesh norm.) The function $f$ also contains
a slow oscillation (seen in Figure~\ref{fig:exactf}) and a localized fast
oscillation inside the spherical cap $\Omega_2$, as shown in left panel of
Figure~\ref{fig:apprx9levelsVsExact}.  Note that the period of the
oscillation, given by the last term in the expression for $f$, corresponds
to approximately $20$ km if mapped to Earth's surface.  This finer
oscillation is too localized to be seen in Figure~\ref{fig:exactf}.

In the experiment we use $9$ multiscale levels, zooming in to the cap
$\Omega_1$ after three global levels, and zooming in again to the smaller
cap $\Omega_2$ after a further three levels. In the first three (global)
levels, the sets of points $X_1$, $X_2$, and $X_3$ are each centers of
equal area regions generated by a partitioning algorithm
\cite{RakSafZho94}. The number of points in each set $X_1, X_2, X_3$ is
increasing by a factor of $4$ (see Table~\ref{tab:err} below); the
sets are not nested. The sets $X_4,X_5$ and $X_6$ are also centers of
equal area regions, but the regions are partitioned from $\Omega_1$ rather
than the whole sphere. For simplicity of language we call levels $4$ to
$6$ the ``local'' levels. Similarly, $X_7, X_8$ and $X_9$ are the results
of partitioning $\Omega_2$ into equal area regions. We call levels $7$ to
$9$ the ``superlocal'' levels.  At every stage the scale is halved exactly
and the mesh norm halved approximately.  The parameter details for the
successive levels are given in Table \ref{tab:err}.

\begin{table}[h]
\begin{center}
\begin{tabular}{|c|c|c|c|c|c|}
\hline
Level &  $N$  & $\delta_j$  & $h_j$ & $\|e_j\|_{L_2(\Omega_2)}$ & $\kappa_j$ \\
\hline
$1$ &  $500$  & $1/4$       & 0.1129 & 4.24e-02 & 1.68\\
$2$ & $2000$  & $1/8$       & 0.0569 & 4.07e-02 & 1.68 \\
$3$ & $8000$  & $1/16$      & 0.0281 & 3.45e-02 & 1.69 \\
$4$ &  $500$  & $1/32$      & 0.0186 & 1.56e-02 & 3.25 \\
$5$ & $2000$  & $1/64$      & 0.0089 & 9.83e-03 & 3.39 \\
$6$ & $8000$  & $1/128$     & 0.0041 & 8.94e-03 & 3.30 \\
$7$ &  $500$  & $1/256$     & 0.0018 & 7.87e-03 & 3.24 \\
$8$ & $2000$  & $1/512$     & 0.0009 & 2.87e-03 & 3.37 \\
$9$ & $8000$  & $1/1024$    & 0.0005 & 7.97e-04 & 3.28 \\
\hline
\end{tabular}
\caption{Parameters and local errors using 9 levels of global and local
interpolation}\label{tab:err}
\end{center}
\end{table}

The RBF used in the experiment is the Wendland function
\[
\Pi(\|\bx\|) = (1-\|\bx\|)^4_{+} (4\|\bx\|+1)
\]
and its scaled version is
\[
  \Pi_\delta(\|\bx\|) = \delta^{-2}(1-\|\bx\|/\delta)^4_{+}
  (4\|\bx\|/\delta+1),
\]
where at level $j$, we set $\delta = \delta_j$. It is known that $\Pi$
generates $H^{3}(\R^3)$ (see \cite{Wen05}) and hence the kernel
$\Phi(\bx,\by) = \Pi(\bx-\by)$ for $\bx,\by \in \mS^2$ generates
$H^{5/2}(\mS^2)$ (see \cite{NarWar02}).

In Figure~\ref{fig:after3globalwithOmega_1}, we show the approximation
after the three global levels, using the point sets $X_1, X_2$ and $X_3$.
We also show on this figure the spherical cap $\Omega_1$, to show the
first region where we intend to zoom in.  At this stage it is clear
visually that the approximation scheme not yet resolved the slower
oscillations, but the broad features, including the constant value in
southern latitudes, are already apparent.

\begin{figure}[h]
\begin{center}
\includegraphics[scale=0.4]{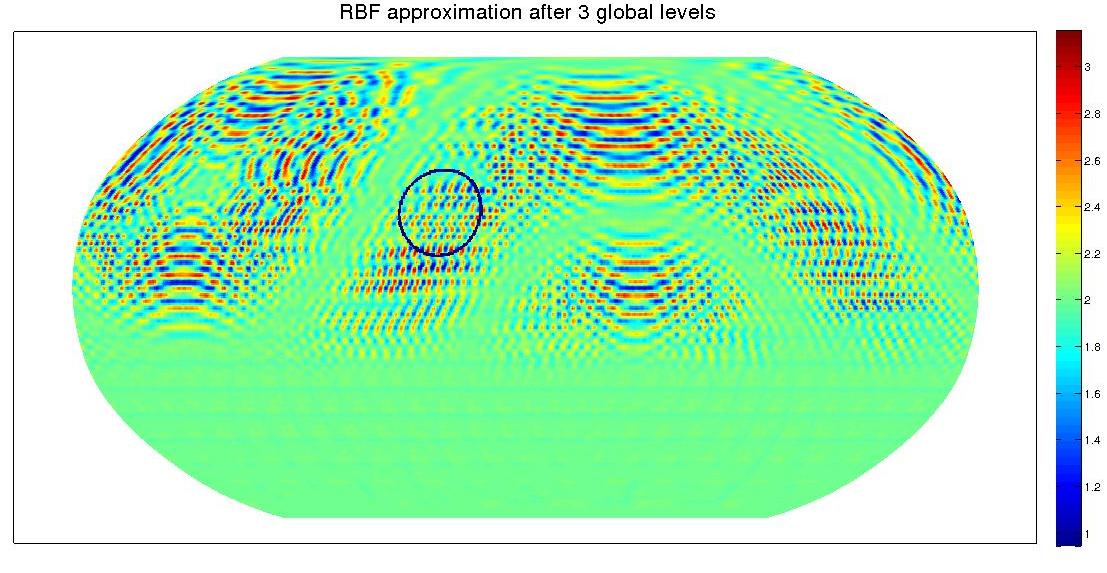}
\end{center}
\caption{The global view after three global levels, with the cap $\Omega_1$ shown}
\label{fig:after3globalwithOmega_1}
\end{figure}

In Figure~\ref{fig:after3glogal3localwithOmega_2}, we show the
approximation on the spherical cap $\Omega_1$ after $6$ levels ($3$ global
and $3$ local).  We also show the smaller spherical cap $\Omega_2$, inside
which it is clear that after 6 multiscale levels the slow oscillations
have largely been resolved but fine scale features have not.

\begin{figure}[h]
\begin{center}
\includegraphics[scale=0.4]{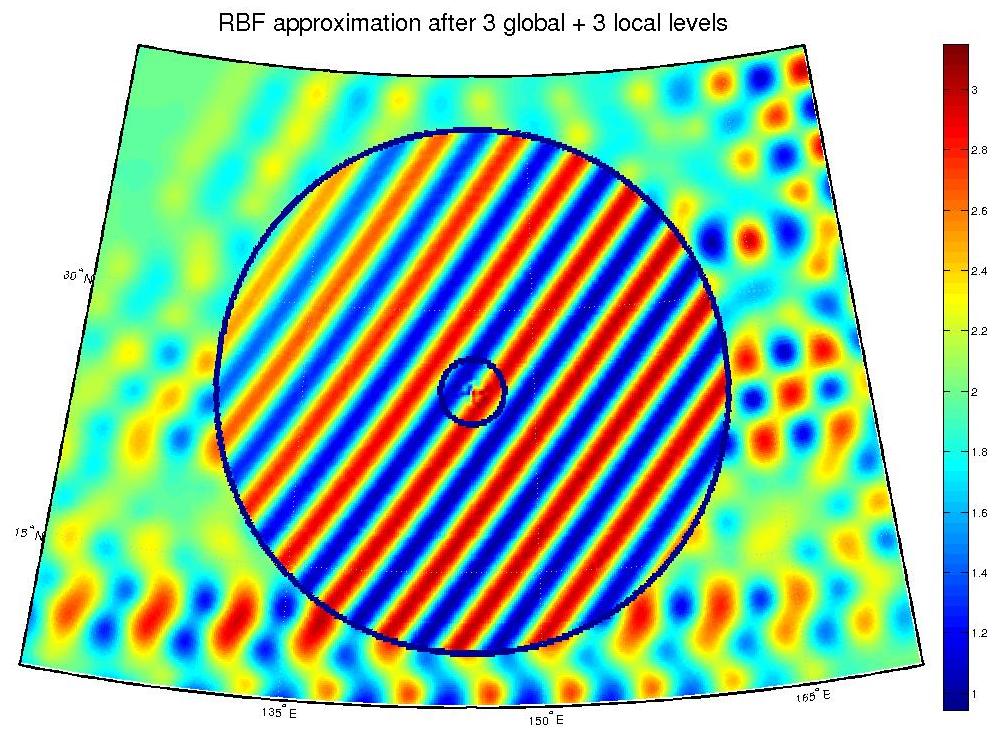}
\end{center}
\caption{The local view after $3$ global and $3$ local levels, showing
both the large cap $\Omega_1$ and the small (``superlocal'') cap $\Omega_2$}
\label{fig:after3glogal3localwithOmega_2}
\end{figure}

Finally, in Figure~\ref{fig:after3global3local3superlocal}, we show the
approximation on the small spherical cap $\Omega_2$ after $9$ levels ($3$
global, $3$ local and $3$ superlocal).  By this stage even the fine scale
features are well resolved.

\begin{figure}[h]
\begin{center}
\includegraphics[scale=0.4]{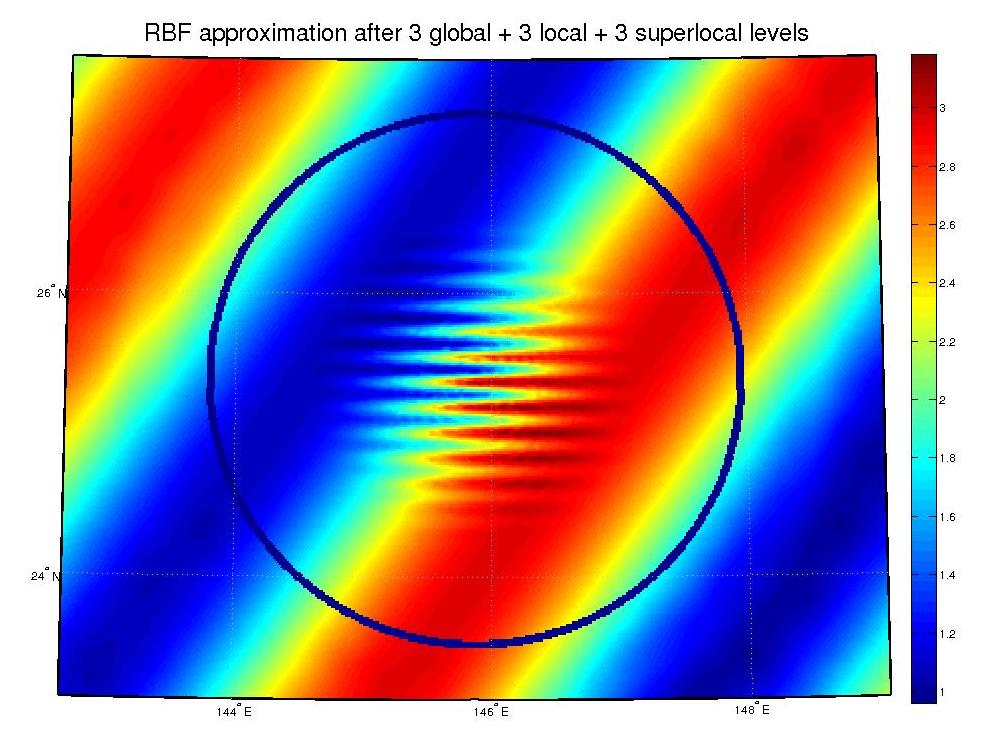}
\end{center}
\caption{Superlocal view of multiscale RBF approximation after $9$ levels, showing
small cap $\Omega_2$}
\label{fig:after3global3local3superlocal}
\end{figure}

For comparison, we carry out a more modest multiscale approximation in
which we use just the last three (superlocal) levels, and separately also
a single scale (`one-shot') approximation, in the second case using the
final scale $\delta=2^{-10}$ and the $8000$ sampling points inside the cap
$\Omega_2$. Poorer approximation quality of the one-shot interpolation can
be seen by eye in the right panel of
Figure~\ref{fig:apprx3levelsVsOneShot}. For the multiscale result in the
left panel of Figure~\ref{fig:apprx3levelsVsOneShot} that uses just the
last three levels the visual result is of intermediate quality: not as
good as the full multiscale result, but certainly better than the one-shot
result.

\begin{table}[h]
\begin{center}
\begin{tabular}{|c|c|c|c|c|c|}
\hline
Level &  $N$  & $\delta_j$  & $h_j$ & $\|e_j\|_{L_2(\Omega_2)}$ &  $\kappa_j$ \\
\hline
$1$ &  $500$  & $1/256$       & 0.0018 & 2.75e-02 &  3.24\\
$2$ & $2000$  & $1/512$       & 0.0009 & 1.49e-02 &  3.37\\
$3$ & $8000$  & $1/1024$      & 0.0005 & 9.18e-03 &  3.28\\
\hline
\end{tabular}
\caption{Local errors table when using multiscale approximation only at
the last 3 superlocal levels}\label{tab:err2}
\end{center}
\end{table}

\begin{figure}[h]
\begin{center}
\includegraphics[scale=0.55]{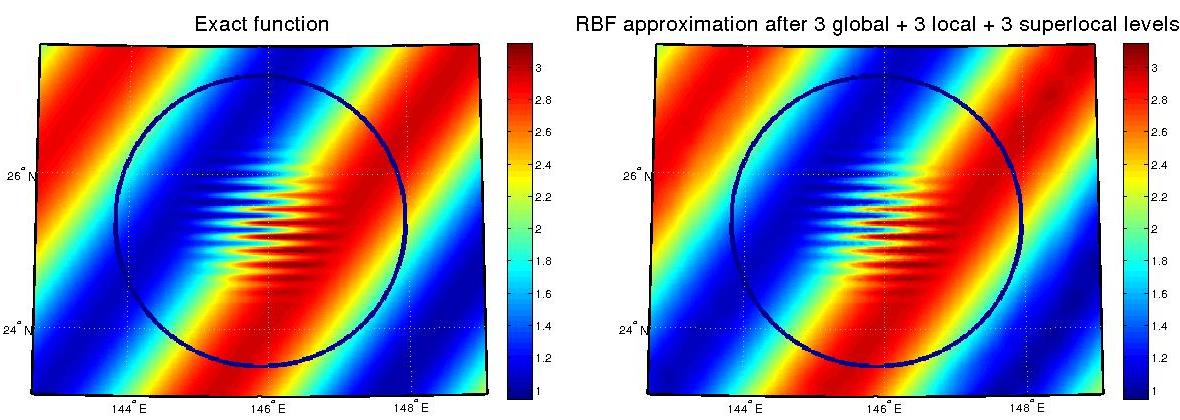}
\end{center}
\caption{Exact function (left) vs. multiscale RBF approximation
after $9$ levels (right)}
\label{fig:apprx9levelsVsExact}
\end{figure}

\begin{figure}[h]
\begin{center}
\includegraphics[scale=0.55]{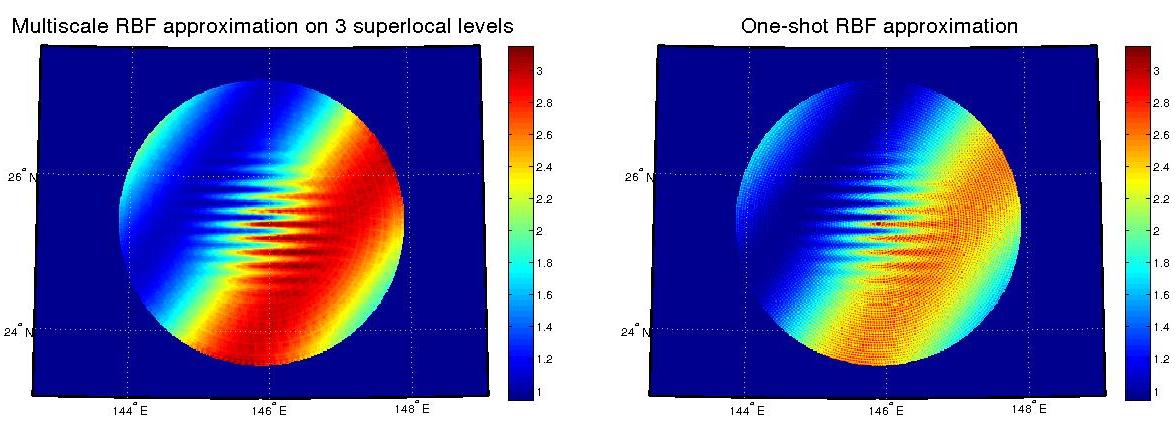}
\end{center}
\caption{Multiscale RBF approximation with $3$ superlocal levels (left) and
one-shot interpolation on level 9 (right)}
\label{fig:apprx3levelsVsOneShot}
\end{figure}

In Tables~\ref{tab:err} and \ref{tab:err2} approximate $L_2(\Omega_2)$
errors are given, in the first case for the full $9$-level multiscale
approximation, in the second case for the $3$-level superlocal version.
These were computed over a rectangular grid $\cG$ of size $1/64$ degree
times $1/64$ degree restricted to the spherical cap $\Omega_2$,
\[
   \|e_j\|_{L_2(\Omega_2)}:=
   \left(
   \frac{|\Omega_2|}{|\cG\cap \Omega_2|}
    \sum_{\bx(\theta,\phi) \in \cG \cap \Omega_2}
   |f(\theta,\phi) - f_j(\theta,\phi)|^2
   \right)^{1/2},
\]
where the area $|\Omega_2|$ of the cap $\Omega_2$ is included  so that the
computed quantity is an approximation to the $L_2(\mS^2)$ norm of the
error. With the grid $\cG$ as above the number of points in the cap
$\Omega_2$ is $|\cG\cap \Omega_2| = 50063$. The condition number of the
interpolation matrix at level $j$ is denoted by $\kappa_j$.

The $\|e\|_{L_2(\Omega_2)}$ error for the one-shot approximation is
$2.00e-02$, which is much larger than errors from the level $9$-level
multiscale approach, and also larger than the error from the $3$-level
multiscale approach in Table~\ref{tab:err2}.  Indeed, it is even an order
of magnitude larger than the approximate $L_2(\Omega_2)$ norm of the
function $f$ itself, which is $7.20e-03$.  The reason for this bad result
is that the one-shot approximation, with its relatively small scale
compared to the mesh norm, fails to resolve well even the slowly varying
background features -- witness the ``pepper and salt'' nature of the image
on the slowly varying part of the right-hand image in Figure~\ref{fig:apprx3levelsVsOneShot}.  
Even the $3$-level multiscale approximation is struggling to resolve the slowly varying background.

A final conclusion might be that the ``zooming in'' multiscale
approximation is successful at all levels.  It could be continued
indefinitely to smaller and smaller regions, giving a consistent
approximation scheme at all levels if the data is available.

\paragraph{Acknowledgement} The
authors gratefully acknowledge the support of the Australian Research
Council.


\begin{thebibliography}{10}

\bibitem{Ada75}
R.~A.~Adams.
\newblock {\em Sobolev spaces}.
\newblock Academic Press, New York, 1975.

\bibitem{AntVan99}
J.~P. Antoine and P.~Vandergheynst.
\newblock Wavelets on the 2-sphere: a group-theoretical approach.
\newblock {\em Appl. Comput. Harmon. Anal.}, 7:262--291, 1999.

\bibitem{CheMenSun03}
D.~Chen, V.~A. Menegatto, and X.~Sun.
\newblock A necessary and sufficient condition for strictly positive definite
  functions on spheres.
\newblock {\em Proc. Amer. Math. Soc.}, 131:2733--2740, 2003.


\bibitem{FilThe08}
F.~Filbir and W.~Themistoclakis.
\newblock Polynomial approximation on the sphere using scattered data.
\newblock{ \em Math. Nachr.}, 281:650–668. 2008.

\bibitem{FreGerSch98}
W.~Freeden, T.~Gervens, and M.~Schreiner.
\newblock {\em Constructive Approximation on the Sphere with Applications to
  Geomathematics}.
\newblock Oxford University Press, Oxford, 1998.

\bibitem{GraKunPot09}
M.~Gr\"{a}f, S.~Kunis, and D.~Potts.
\newblock On the computation of nonnegative quadrature weights on the sphere.
\newblock {\em Appl. Comput. Harmon. Anal.}, 27:124--132, 2009.

\bibitem{HanNarWar12}
T.~Hangelbroek, F.~J. Narcowich, and J.~D. Ward.
\newblock Polyharmonic and related kernels on manifolds: interpolation and
  approximation.
\newblock {\em Found. Comput. Math.}, 12:625--670, 2012.

\bibitem{HSW_handbook} K.Hesse, I.H. Sloan, and R.~S. Womersley,
Numerical integration on the sphere. In: Handbook of Geomathematics (eds.: Willi Freeden, Zuhair Nashed and Thomas Sonar), Springer Verlag, 2010, pp. 1187-1220.

\bibitem{HubMor04}
S.~Hubbert and T.~M. Morton.
\newblock A {D}uchon framework for the sphere.
\newblock {\em J. Approx. Theory}, 129:28--57, 2004.

\bibitem{LeGNarWarWen06}
Q.~T. {Le Gia}, F.~J. Narcowich, J.~D. Ward, and H.~Wendland.
\newblock Continuous and discrete least-square approximation by radial basis
  functions on spheres.
\newblock {\em J. Approx. Theory}, 143:124--133, 2006.

\bibitem{LeGMha08}
Q.~T. {Le Gia}, H.~Mhaskar.
\newblock Quadrature formulas and localized linear polynomial operators on the sphere. \newblock {\em SIAM Numerical Analysis}, 47:440--466, 2008.

\bibitem{LeGSloWen10}
Q.~T. {Le Gia}, I.~H. Sloan, and H.~Wendland.
\newblock Multiscale analysis in {S}obolev spaces on the sphere.
\newblock {\em SIAM J. Numerical Analysis}, 48:2065--2090, 2010.

\bibitem{LeGSloWen12}
Q.~T. {Le Gia}, I.~H. Sloan, and H.~Wendland.
\newblock Multiscale approximation for functions in arbitrary {S}obolev spaces
  by scaled radial basis functions on the unit sphere.
\newblock {\em Appl. Comput. Harmon. Anal.}, 32:401--412, 2012.

\bibitem{LioMag}
J.~L. Lions and E.~Magenes.
\newblock {\em Non-Homogeneous Boundary Value Problems and Applications I}.
\newblock Springer-Verlag, New York, 1972.

\bibitem{Mha05}
H.~N.~Mhaskar.
\newblock On the representation of smooth functions on the sphere using
  finitely many bits.
\newblock {\em Appl. Comput. Harmon. Anal.}, 18:215--233, 2005.

\bibitem{MhaNarWar01}
H.~N.~Mhaskar, F.~J.~Narcowich, and J.~D.~Ward.
\newblock Spherical Marcinkiewicz--Zygmund inequalities and positive quadrature.
\newblock{\em Math. Comp.} 70:1113--1130, 2001.

\bibitem{Mul66}
C.~M\"{u}ller.
\newblock {\em Spherical Harmonics}, volume~17 of {\em Lecture Notes in
  Mathematics}.
\newblock Springer-Verlag, Berlin, 1966.

\bibitem{NarPetWar06}
F.~J. Narcowich, P.~Petrushev, and J.~D. Ward.
\newblock Localized tight frames on spheres.
\newblock {\em SIAM J. Math. Anal.}, 38:574--594, 2006.

\bibitem{NarSunWar07} F.~J. Narcowich, X. Sun, and J.~D. Ward.
\newblock Approximating power of RBFs and their associated SBFs: a comparison.
\newblock {\em Advances Comp.Math.}, 27:107--124, 2007.

\bibitem{NarWar02}
F.~J. Narcowich and J.~D. Ward.
\newblock Scattered data interpolation on spheres: error estimates and locally
  supported basis functions.
\newblock {\em SIAM J. Math. Anal.}, 33:1393--1410, 2002.

\bibitem{Rat94}
J.~G. Ratcliffe.
\newblock {\em Foundations of Hyperbolic Manifolds}.
\newblock Springer, New York, 1994.

\bibitem{RakSafZho94}
E.~B. Saff, E.~A. Rakhmanov, and Y.~M. Zhou.
\newblock Minimal discrete energy on the sphere.
\newblock {\em Mathematical Research Letters}, 1:647--662, 1994.

\bibitem{Sch42}
I.~J. Schoenberg.
\newblock Positive definite function on spheres.
\newblock {\em Duke Math. J.}, 9:96--108, 1942.

\bibitem{SchSwe95}
P.~Schr\"{o}der and W.~Sweldens.
\newblock Spherical wavelets: efficiently representing functions on the sphere.
\newblock {\em Computer Graphics Proceedings (SIGGRAPH ‘95)}, pages 161--172,
  1995.

\bibitem{Tri78}
H.~Triebel.
\newblock {\em Interpolation theory, function spaces and differential operators}.
\newblock North-Holland, Amsterdam, 1978.


\bibitem{Wen05}
H.~Wendland.
\newblock {\em Scattered Data Approximation}.
\newblock Cambridge University Press, Cambridge, 2005.

\bibitem{Wen10} H.~Wendland.
\newblock {Multiscale analysis in Sobolev spaces on bounded domains}.
\newblock {\em Numer. Math.}, 116:493--517, 2010.


\bibitem{XuChe92}
Y.~Xu and E.~W. Cheney.
\newblock Strictly positive definite functions on spheres.
\newblock {\em Proc. Amer. Math. Soc.}, 116:977--981, 1992.

\end{thebibliography}
\end{document}